\documentclass[a4paper]{amsart}

\usepackage{amsmath,amssymb,amsthm,amsfonts}
\usepackage[shortlabels]{enumitem}
\usepackage[UKenglish]{babel}
\usepackage{microtype}
\usepackage[colorlinks=false, pdfborder={0 0 0}]{hyperref}

\newcommand{\StS}{\mathcal{S}\mathcal{S}}
\newcommand{\eqnorm}[1]{\left|\left|\left|#1\right|\right|\right|}
\newcommand{\norm}[1]{\left\lVert#1\right\rVert}
\newcommand{\conv}{\operatorname{conv}}
\newcommand{\diam}{\operatorname{diam}}
\newcommand{\Dz}{\operatorname{Dz}}
\newcommand{\m}{\operatorname{m}}

\newtheorem{theorem}{Theorem}[section]
\newtheorem{proposition}[theorem]{Proposition}
\newtheorem{corollary}[theorem]{Corollary}
\newtheorem{lemma}[theorem]{Lemma}
\theoremstyle{definition}
\newtheorem{definition}[theorem]{Definition}

\begin{document}

\title{Maps with the Radon\textendash{}Nikod\'ym property}

\dedicatory{Dedicated to the memory of Jonathan Borwein.}

\author{L. Garc\'ia-Lirola}
\address{Departamento de Matem\'aticas, Facultad de Matem\'aticas, Universidad de Murcia, 
30100 Espinardo (Murcia), Spain}
\email{luiscarlos.garcia@um.es}
\author{M. Raja}
\address{Departamento de Matem\'aticas, Facultad de Matem\'aticas, Universidad de Murcia, 
30100 Espinardo (Murcia), Spain}
\email{matias@um.es}

\thanks{Partially supported by the grants MINECO/FEDER MTM2014-57838-C2-1-P and Fundaci\'on S\'eneca CARM
19368/PI/14}

\date{December, 2016}

\keywords{Radon\textendash{}Nikod\'ym property; dentability; delta-convex maps}

\subjclass[2010]{46B22}

\begin{abstract}
We study dentable maps from a closed convex subset of a Banach space into a metric space as an attempt of generalize the Radon-Nikod\'ym property to a ``less linear'' frame. We note that a certain part of the theory can be developed in rather great generality. Indeed, we establish that the elements of the dual which are  ``strongly slicing'' for a given uniformly continuous dentable function form a dense $\mathcal G_\delta$ subset of the dual. As a consequence, the space of uniformly continuous dentable maps from a closed convex bounded set to a Banach space is a Banach space. However some interesting applications, as Stegall's variational principle, are no longer true beyond the usual hypotheses, sending us back to the classical case. Moreover, we study the connection between dentability and approximation by delta-convex functions for uniformly continuous functions. Finally, we show that the dentability of a set is closely related with the dentability of delta-convex maps defined on it.
\end{abstract}

\maketitle

\section{Introduction}\label{sec:1}

The Radon\textendash{}Nikod\'ym property (RNP) plays a central role in Banach space theory, particularly in isomorphic and nonlinear theories. It is related to the differentiation of Lipschitz maps (Aronszajn\textendash{}Christensen\textendash{}Mankiewicz's theorem), the extremal structure of convex sets (exposed points), representation theory without compactness (Edgar's theorem), representation of dual function spaces, optimization theory (Stegall's variational principle), etc. The interested reader in RNP, theory and applications, is addressed to~\cite{BL00,B83,D75,FHHMZ11}.

The RNP was extended to linear operators by Re\u\i nov~\cite{R75} and Linde~\cite{L76}. In order to propose a definition for nonlinear maps, we will consider the most geometrical one among the characterisations of the RNP. Our starting point is to consider the notion of \emph{dentability} for maps, which appears as a strengthening of the point of continuity property with the help of the geometry on the domain. Let ${\mathbb H}$ denote the set of all the open half-spaces of a Banach space $X$\@.

\begin{definition}
Let $C \subset X$ be a nonempty subset of a Banach space $X$ and let $M$ be a metric space. A map $f\colon C \rightarrow M$ is said to be \emph{dentable} if for every nonempty bounded set $A \subset C$ and $\varepsilon>0$, there is $H \in {\mathbb H}$ such that $A \cap H \not = \emptyset$ and $\diam(f(A \cap H)) < \varepsilon$\@.
\end{definition}

The set of dentable maps from $C$ to $M$ will be denoted ${\mathcal D}(C,M)$\@. Note that a closed convex set $C$ has the Radon\textendash{}Nikod\'ym property if and only if the identity map $\mathbb{I}\colon C \rightarrow (C,\norm{\cdot})$ is dentable thanks to Rieffel's characterization. Moreover, a bounded subset $A\subset X$ is Asplund if and only if the identity $\mathbb I\colon X^\ast \rightarrow (X^\ast, d_A)$ is dentable, where $d_A(x^*,y^*)=\sup\{|x^*(x)-y^*(x)|: x\in A\}$\@. In addition, a map $f\colon C \rightarrow (M,d)$ induces a pseudometric on $C$ by the formula $\rho(x,y)=d(f(x),f(y))$ and the dentability of the map $f$ is equivalent to the ordinary subset dentability of $C$ with respect to $\rho$\@. Nevertheless, we prefer to consider maps from $C$ to a metric space $M$ since $M$ can carry other structures, as algebraic ones. Let us remark that the notion of dentable map should be compared to two previous related concepts. First, $\sigma$-slicely continuous maps introduced in~\cite{MOTV09} provide a characterisation of the existence of an equivalent LUR norm in a Banach space. On the other hand, $\sigma$-fragmentable maps were introduced in~\cite{JOPV93} in order to study selection problems. 

By ${\mathcal D_U}(C,M)$ we denote the set of maps from ${\mathcal D}(C,M)$ which are moreover uniformly continuous on bounded subsets of $C$\@. That technical condition is necessary in order to perform several operations motivated by the geometrical study of the RNP property, which ensures nice properties for this class of maps. These properties are summarized in the next result.

\begin{theorem}\label{th:structure} Let $C \subset X$ be a closed convex set. If $M$ is a vector space, then ${\mathcal D_U}(C,M)$ is a vector space.
Assume moreover that $C$ is bounded. Then:
\begin{enumerate}[(a)]
\item if $M$ is a complete metric space, then ${\mathcal D_U}(C,M)$ is complete for the metric of uniform convergence on $C$;
\item if $M$ is a Banach space, then ${\mathcal D_U}(C,M)$ is a Banach space;
\item if $M$ is a Banach algebra (resp. lattice), then ${\mathcal D_U}(C,M)$ is a Banach algebra (resp. lattice).
\end{enumerate}
\end{theorem}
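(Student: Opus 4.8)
The plan is to reduce the whole theorem to a single geometric fact: any two maps $f,g\in\mathcal D_U(C,M)$ admit, on every bounded subset, a \emph{common} slicing functional. Closure of $\mathcal D_U(C,M)$ under scalar multiples is immediate, since $m\mapsto\lambda m$ is uniformly continuous on $M$, so a half-space on which $f$ has small diameter makes $\lambda f$ have small diameter as well; uniform continuity is obviously preserved. The only genuine content in the vector-space clause is closure under sums. Writing $\rho_h(x,y)=d(h(x),h(y))$ for the induced pseudometric and using that the metric is translation invariant, one has $\rho_{f+g}\le\rho_f+\rho_g$, so it suffices to produce, for each bounded $A\subset C$ and each $\varepsilon>0$, a single half-space $H$ with $A\cap H\neq\emptyset$ on which \emph{both} $\diam(f(A\cap H))$ and $\diam(g(A\cap H))$ are $<\varepsilon$. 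The naive attempt---slice $A$ with $f$, then slice the resulting piece with $g$---fails, because the set $A\cap H_1\cap H_2$ so obtained is cut by two half-spaces and is generally not of the form $A\cap H$; a small-$f$-slice and a small-$g$-slice need not have any common sub-slice at all. This is the main obstacle.

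To overcome it I would first establish the key lemma of the paper: for a bounded closed convex $D$ and $f\in\mathcal D_U(D,M)$, the set $G_f$ of functionals $x^*$ that are \emph{strongly slicing}, i.e.\ $\inf_{\delta>0}\diam f(S(D,x^*,\delta))=0$ where $S(D,x^*,\delta)=\{x\in D: x^*(x)>\sup_D x^*-\delta\}$, is a dense $G_\delta$ in $X^*$. Writing $G_f=\bigcap_n U_n$ with $U_n=\{x^*:\ \diam f(S(D,x^*,\delta))<1/n\text{ for some }\delta>0\}$, openness of $U_n$ follows from boundedness of $D$ alone: if $\norm{y^*-x^*}$ is small then $S(D,y^*,\delta/2)\subseteq S(D,x^*,\delta)$, so the small slice-diameter persists. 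Density of $U_n$ is where dentability enters: given $x_0^*$, slice off a piece $A'=S(D,x_0^*,\delta_0)$, apply dentability of $f$ to the bounded set $A'$ to obtain a half-space $\{z^*>\beta\}$ with $\diam f(A'\cap\{z^*>\beta\})<1/n$, and tilt $x_0^*$ to $x_0^*+t z^*$ for small $t>0$; a standard estimate shows the slices of $x_0^*+tz^*$ eventually fall inside $A'\cap\{z^*>\beta\}$, whence $x_0^*+tz^*\in U_n$ within distance $t$ of $x_0^*$. This Bourgain-type tilting estimate is the delicate step, and it is the one genuinely hard ingredient of the whole proof.

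Granting the lemma, sums are handled as follows. Given a bounded $A$, set $D=\overline{\conv}(A)$, which is bounded closed convex; since $\sup_D x^*=\sup_A x^*$, slices of $D$ meet $A$ in slices of $A$. By the lemma $G_f$ and $G_g$ are dense $G_\delta$ subsets of $X^*$, so by Baire's theorem $G_f\cap G_g\neq\emptyset$; any $x^*$ in the intersection gives, for $\delta$ small enough, a slice $S(A,x^*,\delta)=A\cap\{x^*>\sup_A x^*-\delta\}$ on which $\diam f$ and $\diam g$ are each $<\varepsilon/2$, hence $\diam((f+g)(\cdot))<\varepsilon$. Thus $f+g$ is dentable, and being uniformly continuous on bounded sets it lies in $\mathcal D_U(C,M)$. (For unbounded $C$ one runs this on $\overline{\conv}(A)$ for each bounded $A$.)

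The structural clauses then follow quickly. For (a), a $3\varepsilon$-argument shows that both dentability and uniform continuity pass to uniform limits, so completeness of $M$ makes $\mathcal D_U(C,M)$ complete for uniform convergence. For (b), uniform continuity together with convexity of $C$ forces $f(C)$ to be bounded (chain a segment in $C$ into finitely many short steps), so $\norm{f}_\infty<\infty$; combining the vector-space structure with (a) yields a Banach space. For (c), the pointwise estimates $\rho_{fg}\le K(\rho_f+\rho_g)$ on $A$ (with $K$ a bound for $\norm f,\norm g$ there) and $\rho_{|f|}\le\rho_f$ show that the common slicing functional produced above simultaneously controls $fg$ and $|f|$; since products and moduli of bounded uniformly continuous maps are again uniformly continuous, $\mathcal D_U(C,M)$ is closed under multiplication and under $|\cdot|$, and submultiplicativity of the norm, respectively the identity $f\vee g=\tfrac12(f+g+|f-g|)$, supply the Banach algebra, respectively Banach lattice, structure.
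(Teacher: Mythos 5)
Your overall architecture matches the paper's: reduce everything to the dense-$\mathcal G_\delta$ lemma for strongly slicing functionals (the paper's Theorem~\ref{th:ssGdelta}), get common slicing functionals for $f$ and $g$ by Baire, and then dispatch (a)--(c) by the $3\varepsilon$-argument and uniform continuity of the algebraic operations. The structural clauses are fine. But the proof you sketch for the one step you yourself call ``the genuinely hard ingredient''---density of $U_n$---does not work, and the gap is not a missing computation but a wrong mechanism. You propose: slice off $A'=S(D,x_0^*,\delta_0)$, use dentability of $f$ on $A'$ to get a half-space $\{z^*>\beta\}$ with small oscillation on $A'\cap\{z^*>\beta\}$, then tilt to $y^*=x_0^*+tz^*$ and claim its slices eventually fall inside $A'\cap\{z^*>\beta\}$. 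Quantify both containments (normalize $\norm{z^*}=1$): for $S(D,y^*,\delta)\subset A'$ you need roughly $t\cdot\operatorname{diam}_{z^*}(D)+\delta<\delta_0$, i.e.\ $t$ \emph{small}; for $S(D,y^*,\delta)\subset\{z^*>\beta\}$ the available witnesses $v\in A'\cap\{z^*>\beta\}$ only give $z^*(x)>z^*(v)-(\delta_0+\delta)/t$, forcing $t>(\delta_0+\delta)/(\sup\{z^*,A'\}-\beta)$, i.e.\ $t$ \emph{large}. Since $\sup\{z^*,A'\}-\beta\leq\operatorname{diam}_{z^*}(A')\leq\operatorname{diam}_{z^*}(D)$ (in the nontrivial case $\beta\geq\inf\{z^*,A'\}$), the two constraints are incompatible for every $t$. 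This is not an artifact of crude estimates: take $D=\conv\{(0,0),(1,-\delta_0/2),(2,-3\delta_0)\}\subset\mathbb R^2$, $x_0^*(x,y)=y$, $z^*(x,y)=x$, $\beta=1.1$; then for small $t$ the slices of $x_0^*+tz^*$ cluster at $(0,0)$, for intermediate $t$ at $(1,-\delta_0/2)$ (still outside $\{z^*>\beta\}$), and once $t$ is large enough to reach the subslice the slices have already migrated to $(2,-3\delta_0)\notin A'$. So if the only small-oscillation slice dentability hands you is that shallow one, no tilt of $x_0^*$ captures it. This is exactly the ``a slice of a slice is not a slice'' obstruction you correctly identified for the sum argument, resurfacing here.

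The paper's proof avoids tilting altogether, and you would need its machinery. It first proves the superlemma (Lemma~\ref{lemma:super}): if $A\subset C\subset\overline{\conv}(A\cup B)$ with $\diam(f(A))<\varepsilon$ and $A\setminus B\neq\emptyset$, there is a half-space missing $B$ with $f$-oscillation $<\varepsilon$ on all of $C$---note this is where uniform continuity of $f$ is genuinely used (the sets $D_r$ and the estimate $\norm{x-y}\leq r\diam(A-B)$), whereas your sketch of the key lemma never invokes it, which is itself a warning sign. It then bootstraps, via Bishop--Phelps and the set $F$ of sup-attaining points, to Lemma~\ref{lemma:slice}: one can find a small-oscillation slice \emph{missing any prescribed} closed convex set $D\not\supset C$. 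Finally, in Theorem~\ref{th:ssGdelta} the closeness to $x^*$ is obtained geometrically rather than by perturbation: one applies Lemma~\ref{lemma:slice} with $D=\overline{\conv}(A\cup(y+V_r))\cap\{x:x^*(x)\leq a\}$, where $y+V_r$ is a large flat disc in $\ker x^*$; any half-space avoiding this disc must, by Bourgain's Lemma~\ref{lemma:bour}, be defined by a functional within $\varepsilon$ of $x^*$. Your proposal is missing precisely these three ingredients, so as it stands the vector-space clause (and hence everything downstream of the common-slicing-functional step) is unproven.
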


The key for the proof of Theorem~\ref{th:structure} is the fact that there are many functionals, in a categorical sense, defining slices of small oscillation, which will be proved in Theorem~\ref{th:ssGdelta}.

Particularly interesting is the case where $C$ is convex and closed and $M=\mathbb R$, because then ${\mathcal D}(C,{\mathbb R})$ contains the bounded above lower semicontinuous convex functions. Indeed, for $A \subset C$ and $\varepsilon>0$, any  $H \in {\mathbb H}$ containing a point of $A$ and disjoint from the convex closed set
\[ D = \{x \in C: f(x) \leq \sup\{f,A\} - \varepsilon \} \]
will satisfy that $\diam(f(A \cap H)) \leq \varepsilon $\@.
 Having in mind that $\mathcal{D}(C,\mathbb R)$ is a linear space, the difference of two bounded convex continuous functions is dentable. The differences of convex functions,
  usually named $\mathcal{DC}$ functions, play an important role in variational analysis and optimization (see, e.g.~\cite{BB11,H85,T95}). Moreover, the possibility of a real function to be uniformly approximated by $\mathcal{DC}$ functions is closely related to its dentability. We do not state here the main result Theorem~\ref{th:deltaconvex} because it requires special definitions. Instead of introducing more notions here, we will give a glance of the sort of results we can prove for dentable functions.

The following expresses the ``equi-dentability'' for finitely many dentable real functions. Unfortunately, our techniques require uniform continuity.

\begin{proposition}\label{prop:equident}
Let $C \subset X$ be a bounded closed convex set. Given $f_1, \dotsc, f_n \in {\mathcal D_U}(C,{\mathbb R})$ and $\varepsilon >0$, there is $H \in {\mathbb H}$ such that $C \cap H \not =\emptyset$ and
\[ \max\{ \diam(f_1(C \cap H)), \dotsc, \diam(f_n(C \cap H)) \} < \varepsilon. \]
\end{proposition}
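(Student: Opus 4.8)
The plan is to reduce the simultaneous statement to the dentability of a single vector-valued map and let Theorem~\ref{th:structure} carry the weight. A naive induction---slicing $C$ to make $f_1$ oscillate little, then slicing the resulting slice to control $f_2$, and so on---breaks down because a finite intersection of half-spaces is not a half-space, so one cannot present the final set in the required form $C\cap H$ with a single $H\in{\mathbb H}$. The vector-space structure of $\mathcal D_U$ lets us avoid any iteration, and turning $n$ separate slices into one is precisely the obstacle it resolves.

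Equip $\mathbb R^n$ with the supremum norm $\norm{\cdot}_\infty$ and let $e_1,\dots,e_n$ be its canonical basis. First I would check that each coordinate embedding $g_i\colon C\to(\mathbb R^n,\norm{\cdot}_\infty)$, defined by $g_i(x)=f_i(x)e_i$, lies in $\mathcal D_U(C,\mathbb R^n)$. This is immediate from the identity $\norm{g_i(x)-g_i(y)}_\infty=|f_i(x)-f_i(y)|$: uniform continuity on bounded sets transfers verbatim from $f_i$, and for dentability any half-space making $f_i$ oscillate less than $\eta$ on $A\cap H$ makes $g_i$ oscillate by the same amount, since the two diameters coincide.

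Next, since $(\mathbb R^n,\norm{\cdot}_\infty)$ is a Banach space, Theorem~\ref{th:structure} asserts that $\mathcal D_U(C,\mathbb R^n)$ is a vector space. Hence $F:=\sum_{i=1}^n g_i=(f_1,\dots,f_n)$ belongs to $\mathcal D_U(C,\mathbb R^n)$. Applying the dentability of $F$ to the bounded set $A=C$ and to the given $\varepsilon$ produces $H\in{\mathbb H}$ with $C\cap H\neq\emptyset$ and $\diam(F(C\cap H))<\varepsilon$, the diameter being computed in $\norm{\cdot}_\infty$.

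Finally I would unwind the supremum norm: for any $S\subseteq C$ one has $\diam(F(S))=\max_{1\le i\le n}\diam(f_i(S))$, because $\norm{F(x)-F(y)}_\infty=\max_i|f_i(x)-f_i(y)|$. Combining this with the previous line gives $\max_i\diam(f_i(C\cap H))<\varepsilon$, which is exactly the assertion. The only genuine content beyond these routine verifications is packaged inside Theorem~\ref{th:structure}, so with that result in hand the proposition is essentially a corollary; the main difficulty, namely the conversion of finitely many half-space conditions into a single slice, is exactly what the linearity of $\mathcal D_U$ dispatches.
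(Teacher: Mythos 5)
Your proof is correct, but it routes through different citations than the paper's. The paper deduces the proposition from Corollary~\ref{cor:prodDent}: by Theorem~\ref{th:ssGdelta} each set $\StS(f_i,C)$ of $f_i$-strongly slicing functionals is a dense $\mathcal G_\delta$ in $X^*$, so by Baire's theorem $\bigcap_{i=1}^n \StS(f_i,C)$ is nonempty, and any functional in this intersection gives, for $t$ small enough, a single slice $S(C,x^*,t)$ on which every $f_i$ oscillates less than $\varepsilon$. You instead invoke the linearity of $\mathcal D_U(C,\mathbb R^n)$ from Theorem~\ref{th:structure} to conclude that $F=(f_1,\dotsc,f_n)$ is dentable into $(\mathbb R^n,\norm{\cdot}_\infty)$ and then unwind the identity $\diam(F(S))=\max_i \diam(f_i(S))$; your verifications (the isometric identity $\norm{g_i(x)-g_i(y)}_\infty=|f_i(x)-f_i(y)|$ for the coordinate embeddings, the commuting of supremum with finite maximum) are all sound, and there is no circularity, since the vector-space claim of Theorem~\ref{th:structure} is proved via Lemma~\ref{lemma:binop} without any reference to Proposition~\ref{prop:equident}. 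Be aware, though, that the two routes share the same core: Lemma~\ref{lemma:binop}, on which the linearity you use rests, is itself established by choosing a functional simultaneously in $\StS(f,A)$ and $\StS(g,A)$ via Theorem~\ref{th:ssGdelta} and Baire---precisely the paper's argument for Corollary~\ref{cor:prodDent}. In effect you reverse the paper's direction of derivation: the paper first produces the common slice and records dentability of the product map as a byproduct, whereas you first obtain dentability of the product map (from linearity) and extract the common slice from it. The paper's formulation buys slightly more information---the simultaneously slicing functionals form a residual subset of $X^*$, and the small-oscillation slices can be taken of the canonical form $S(A,x^*,t)$ for one fixed $x^*$ and all small $t$, on any nonempty bounded $A\subset C$---while your reduction buys economy: granted Theorem~\ref{th:structure}, the proposition becomes a two-line corollary, and your opening remark correctly identifies why the naive iterated-slicing induction fails and why some such device is needed.
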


Let state separately the following curious corollary, which was observed by Bourgain in~\cite{B80}.

\begin{corollary}\label{cor:equidual}
Let $C \subset X$ be a bounded closed convex set. Given $x^*_1,\dotsc,x^*_n \in X^*$ and $\varepsilon >0$, there is $H \in {\mathbb H}$ such that $C \cap H \not =\emptyset$ and
\[ \max\{ \diam(x^*_1(C \cap H)), \dotsc, \diam(x^*_n(C \cap H)) \} < \varepsilon. \]
\end{corollary}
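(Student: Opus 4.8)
The plan is to obtain Corollary~\ref{cor:equidual} as an immediate specialisation of Proposition~\ref{prop:equident}, the only genuine task being to check that each functional $x^*_i$, restricted to $C$, belongs to the class ${\mathcal D_U}(C,{\mathbb R})$ to which the proposition applies. Once this membership is established, I would simply set $f_i = x^*_i|_C$ for $i=1,\dotsc,n$ and invoke Proposition~\ref{prop:equident} verbatim, since $\diam(x^*_i(C\cap H))=\diam(f_i(C\cap H))$.

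To verify the hypotheses I would check the two defining conditions of ${\mathcal D_U}(C,{\mathbb R})$ separately. For uniform continuity, note that for all $x,y\in C$ one has $|x^*_i(x)-x^*_i(y)|=|x^*_i(x-y)|\le\norm{x^*_i}\,\norm{x-y}$, so each $x^*_i$ is $\norm{x^*_i}$-Lipschitz on $X$ and in particular uniformly continuous on $C$. For dentability, I would appeal to the observation made in the introduction that, for a closed convex $C$, the space ${\mathcal D}(C,{\mathbb R})$ contains every bounded above lower semicontinuous convex function. A continuous linear functional is convex (indeed affine) and lower semicontinuous, and since $C$ is bounded and $x^*_i$ is continuous, $x^*_i$ is bounded above on $C$; hence $x^*_i|_C\in{\mathcal D}(C,{\mathbb R})$. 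Combining the two observations gives $x^*_i|_C\in{\mathcal D_U}(C,{\mathbb R})$.

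There is essentially no obstacle here: all the mathematical content resides in Proposition~\ref{prop:equident}, and the corollary is a transcription of it to the linear setting. The only point requiring a word of care is that the proposition is stated for uniformly continuous dentable functions, which is why I isolate the Lipschitz estimate above rather than relying on mere continuity; this is precisely the ``technical condition'' flagged before the proposition. With membership in ${\mathcal D_U}(C,{\mathbb R})$ confirmed, the conclusion follows directly.
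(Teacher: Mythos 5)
Your proof is correct and follows essentially the same route as the paper, which derives both Proposition~\ref{prop:equident} and Corollary~\ref{cor:equidual} as direct specialisations of the general equi-dentability statement (the paper cites Corollary~\ref{cor:prodDent}, of which Proposition~\ref{prop:equident} is itself the scalar case, so the two routes coincide). Your explicit verification that each $x^*_i|_C$ lies in ${\mathcal D_U}(C,{\mathbb R})$ --- Lipschitz, hence uniformly continuous, and dentable as a bounded above continuous convex function via the observation in the introduction --- is exactly the step the paper leaves implicit.
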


Another particular case that we have considered is the dentability of the identity map when $M=C$ endowed with a metric which is uniformly continuous with respect to the norm. In that case, not much can be obtained unless the metric induces the norm topology. But those hypotheses are not more general than the RNP as the following result shows.

\begin{theorem}\label{th:rnp}
Let $C$ be a closed convex subset which is dentable with respect to a complete metric $d$ defined on it. Assume moreover that $d$ is uniformly continuous on bounded sets with respect to the norm and induces the norm topology. Then $C$ has the RNP.
\end{theorem}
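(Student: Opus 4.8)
The plan is to route everything through Rieffel's characterisation. Since $C$ has the RNP if and only if $\mathbb I\colon C\to(C,\norm{\cdot})$ is dentable, and the RNP of $C$ is equivalent to the existence of norm-slices of arbitrarily small diameter in each of its closed bounded convex subsets, it suffices to fix a closed bounded convex $K\subseteq C$ and produce norm-slices of $K$ of arbitrarily small diameter. First I would check that the hypotheses descend to $K$. Because $d$ induces the norm topology, $K$ is $d$-closed, so $(K,d)$ is a complete metric space; the identity $\mathbb I_K\colon K\to(K,d)$ is dentable, since every bounded $A\subseteq K$ is a bounded subset of $C$; and it is uniformly continuous because $d$ is uniformly continuous with respect to the norm on bounded sets. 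Hence $\mathbb I_K\in\mathcal D_U(K,(K,d))$, with $K$ closed bounded convex and $(K,d)$ complete.

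Next I would apply Theorem~\ref{th:ssGdelta} to the map $f=\mathbb I_K$: its strongly slicing functionals form a dense $G_\delta$ subset of $X^\ast$, in particular a nonempty one, so I may fix $x^\ast\in X^\ast$ strongly slicing for $f$. Writing $S_\delta=\{x\in K:\ x^\ast(x)>\sup_K x^\ast-\delta\}$ for the associated slices, the choice of $x^\ast$ says exactly that the $f$-oscillation of $S_\delta$, namely the $d$-diameter $\diam_d(S_\delta)$, tends to $0$ as $\delta\to0^+$. The slices $S_\delta$ are nonempty and increase with $\delta$, so for any sequence $\delta_n\downarrow 0$ the $d$-closures $\overline{S_{\delta_n}}^{\,d}$ form a decreasing family of nonempty $d$-closed sets whose $d$-diameters tend to $0$. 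By completeness of $(K,d)$ and Cantor's intersection theorem their intersection is a single point $p\in K$, and consequently $p\in\overline{S_\delta}^{\,d}$ for every $\delta>0$.

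The decisive step is to upgrade the $d$-diameter control to norm-diameter control, and this is precisely where the single limit point $p$ pays off: although $d$ and $\norm{\cdot}$ need not be uniformly equivalent on $K$, they agree topologically, and topological agreement at the one point $p$ is all that is required. Concretely, given $\eta>0$, the open norm-ball $B(p,\eta)$ is a $d$-neighbourhood of $p$, so there is $r>0$ with $B_d(p,r)\cap K\subseteq B(p,\eta)$; choosing $\delta$ so small that $\diam_d(S_\delta)<r$ and using $p\in\overline{S_\delta}^{\,d}$ forces $S_\delta\subseteq B_d(p,r)\cap K$, whence $S_\delta$ has norm-diameter at most $2\eta$. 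Letting $\eta\to0$ shows that the slices $S_\delta$ have norm-diameter tending to $0$, so $K$ is norm-dentable; as $K$ was an arbitrary closed bounded convex subset, $C$ has the RNP. The main difficulty here is conceptual rather than computational: a slice of small $d$-diameter need not be norm-small, and the whole mechanism is that Theorem~\ref{th:ssGdelta} furnishes a functional whose slices collapse to a single point, thereby converting the merely topological (non-uniform) equivalence of $d$ and $\norm{\cdot}$ into genuine norm-smallness.
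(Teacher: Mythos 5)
Your proof is correct and follows essentially the same route as the paper's: apply Theorem~\ref{th:ssGdelta} to the identity viewed as a map into $(C,d)$, use completeness of $d$ (via Cantor's intersection theorem) to collapse the closures of the slices $S(A,x^*,t)$ to a single point, and then exploit the coincidence of the $d$- and norm-topologies at that point to turn small $d$-diameter into small norm-diameter. The only cosmetic difference is that you first reduce to a closed bounded convex subset $K$ and slice $K$ itself, whereas the paper slices an arbitrary nonempty bounded $A\subset C$ directly; both reductions are standard and equivalent.
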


This paper is organised as follows. In the second section we establish stability properties of dentable maps, which allow us to prove Theorem~\ref{th:structure}. The third section includes a characterisation of sets with the RNP in terms of dentability of continuous maps defined on them, and a characterisation of uniformly continuous finitely dentable maps. The fourth section is devoted to the relation between dentable maps and delta-convex maps. Finally, in the fifth section we investigate sets which are dentable with respect to a metric defined on it, including the proof of Theorem~\ref{th:rnp}. Throughout the paper $C$ will denote a closed convex subset of a Banach space $X$ and $M$ will denote a metric space with a metric $d$\@. Our notation is standard and will normally follow the books~\cite{FHHMZ11} and~\cite{BW10}.

Along the paper we will consider only real Banach spaces. We mention in passing that Ghoussoub and Maurey studied in~\cite{GM89} the geometrical structure of sets with the \emph{analytic Radon\textendash{}Nikod\'ym property}, which may be viewed as a complex analogue of the RNP.

\section{Properties of the dentable maps}\label{sec:2}

We begin the section by studying the relation between dentable maps and RN-operators. We need the following result, which should be compared with~\cite[Proposition 2.3.2]{B83}.

\begin{proposition}\label{prop:dent} A map $f\colon C\rightarrow M$ is dentable if and only if for every nonempty bounded set $A\subset C$ and every $\varepsilon>0$ there exists $x\in A$ such that $x\notin \overline{\conv}(A\setminus f^{-1}(B_M(f(x),\varepsilon))$\@.
\end{proposition}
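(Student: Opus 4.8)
The plan is to prove both implications by translating between open half-spaces and the separation of a point from a closed convex set, exactly in the spirit of the classical set-dentability argument (Rieffel's criterion). Throughout I would read $B_M(f(x),\varepsilon)$ as the \emph{open} ball, so that $A\setminus f^{-1}(B_M(f(x),\varepsilon))$ is precisely the set of $y\in A$ with $d(f(y),f(x))\geq\varepsilon$.

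For the direct implication, assume $f$ is dentable and fix a nonempty bounded $A\subset C$ and $\varepsilon>0$. I would apply dentability to $A$ with the \emph{same} $\varepsilon$ to obtain a half-space $H=\{z:x^*(z)>\alpha\}\in\mathbb{H}$ with $A\cap H\neq\emptyset$ and $\diam(f(A\cap H))<\varepsilon$, and then pick any $x\in A\cap H$. The key observation is that no point $y\in A$ with $d(f(y),f(x))\geq\varepsilon$ can lie in $H$: otherwise $x,y\in A\cap H$ would force $d(f(y),f(x))<\varepsilon$. Hence $A\setminus f^{-1}(B_M(f(x),\varepsilon))$ is contained in the closed half-space $\{z:x^*(z)\leq\alpha\}$, which being convex and closed also contains $\overline{\conv}(A\setminus f^{-1}(B_M(f(x),\varepsilon)))$. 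Since $x^*(x)>\alpha$, the point $x$ escapes this set, as required.

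For the converse, fix $A$ and $\varepsilon>0$, set $\delta=\varepsilon/3$, and use the hypothesis to get $x\in A$ with $x\notin K:=\overline{\conv}(A\setminus f^{-1}(B_M(f(x),\delta)))$. If $K=\emptyset$, then every $y\in A$ satisfies $d(f(y),f(x))<\delta$, so $\diam(f(A))\leq 2\delta<\varepsilon$ and any half-space through $x$ works. Otherwise $K$ is a nonempty closed convex set missing $x$, so by the geometric Hahn\textendash{}Banach theorem there are $x^*\in X^*$ and $\alpha$ with $x^*(x)>\alpha\geq\sup_{z\in K}x^*(z)$. Putting $H=\{z:x^*(z)>\alpha\}$ gives $x\in A\cap H$, and any $y\in A\cap H$ has $x^*(y)>\alpha$, hence $y\notin K$, hence $d(f(y),f(x))<\delta$; the triangle inequality then yields $\diam(f(A\cap H))\leq 2\delta<\varepsilon$, which is dentability.

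No genuinely hard step arises: the argument is a direct transcription of the separation proof of dentability, with the scalar inequalities in $X$ replaced by distance inequalities in $M$ read through $f$. The only points demanding care are the open-versus-closed ball convention and the passage from the strict pointwise bounds $d(f(y),f(x))<\delta$ to a strict bound on the diameter, which is exactly why I introduce the buffer $\delta=\varepsilon/3$; and the degenerate case $K=\emptyset$, which must be handled separately since Hahn\textendash{}Banach separation presupposes a nonempty convex set.
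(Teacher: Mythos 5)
Your proof is correct and takes essentially the same route as the paper's: the forward implication by noting that $A\setminus f^{-1}(B_M(f(x),\varepsilon))$ is disjoint from the slicing half-space, and the converse by Hahn--Banach separation of $x$ from the closed convex hull, using a buffer radius. In fact you are slightly more careful than the paper, which takes the buffer $\varepsilon/2$ (giving only $\diam(f(A\cap H))\leq\varepsilon$ rather than a strict inequality) and does not mention the degenerate case of an empty hull; your choice of $\varepsilon/3$ and the explicit treatment of $K=\emptyset$ tidy up both points.
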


\begin{proof}
First assume that $f$ is a dentable map. Fix $\varepsilon>0$ and $A\subset C$ nonempty and bounded. By hypothesis, there exists an open half-space $H$ such that $A\cap H\neq \emptyset$ and $\diam(f(A\cap H))<\varepsilon$\@. Then $A\cap H\subset f^{-1}(B_M(f(x),\varepsilon))$, so $\overline{\conv}(A\setminus f^{-1}(B_M(f(x),\varepsilon))\cap H=\emptyset$ and any $x\in A\cap H$ does the work.

Conversely, fix $\varepsilon>0$ and let $A\subset C$ be nonempty and bounded. Take $x\in A$ so that $x\notin \overline{\conv}(A\setminus f^{-1}(B_M(f(x),\varepsilon/2))$\@. Then the dentability condition is witnessed by any slice of $A$ separating $x$ from $\overline{\conv}(A\setminus f^{-1}(B_M(f(x),\varepsilon/2))$\@. 
\end{proof}

Re\u\i nov~\cite{R77} characterised RN-operators as those bounded operators satisfying the condition in Proposition~\ref{prop:dent}. Therefore, the notion of dentable function extends the class of RN-operators to the non-linear setting.

\begin{corollary} Let $T\colon X\rightarrow Y$ be a bounded linear operator between Banach spaces. Then $T$ is a RN-operator if, and only if, $T$ is dentable.
\end{corollary}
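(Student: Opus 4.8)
The plan is to observe that, once both sides are unwound, the statement is an immediate consequence of Proposition~\ref{prop:dent} together with Re\u\i nov's characterisation of RN-operators in~\cite{R77}. First I would specialise Proposition~\ref{prop:dent} to the map $f=T$ on the closed convex set $C=X$. A bounded linear operator is Lipschitz, hence uniformly continuous, so $T$ is an admissible map. Using linearity, for $x\in X$ and $\varepsilon>0$ one has $T^{-1}(B_Y(Tx,\varepsilon))=\{y\in X:\norm{T(y-x)}<\varepsilon\}$, so that for a bounded set $A\subset X$,
\[ A\setminus T^{-1}(B_Y(Tx,\varepsilon))=\{y\in A:\norm{Ty-Tx}\geq\varepsilon\}. \]
Thus Proposition~\ref{prop:dent} asserts precisely that $T$ is dentable if and only if for every nonempty bounded $A\subset X$ and every $\varepsilon>0$ there is $x\in A$ with $x\notin\overline{\conv}(\{y\in A:\norm{Ty-Tx}\geq\varepsilon\})$.

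The second ingredient is Re\u\i nov's theorem~\cite{R77}, recalled in the remark preceding the statement: a bounded linear operator $T$ is an RN-operator exactly when it satisfies this very condition. Hence ``RN-operator'' and ``dentable'' are two names for the same property, and the two implications follow at once.

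The only point requiring care --- and it is the main (modest) obstacle --- is checking that the geometric condition produced by Proposition~\ref{prop:dent} coincides with the one appearing in Re\u\i nov's characterisation. In particular one should confirm that the exact radius and the choice of strict versus non-strict inequality used in~\cite{R77} are immaterial: since the condition is quantified over all $\varepsilon>0$, replacing the threshold $\varepsilon$ by $\varepsilon/2$ (as already exploited in the proof of Proposition~\ref{prop:dent}) shows that the resulting families of conditions are equivalent and insensitive to such changes. With this identification in hand the corollary is proved.
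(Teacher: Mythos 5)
Your proposal is correct and matches the paper's proof, which likewise derives the corollary immediately from Proposition~\ref{prop:dent} together with Re\u\i nov's characterisation of RN-operators in~\cite{R77}. One small remark: your comment that $T$ is uniformly continuous is unnecessary, since Proposition~\ref{prop:dent} applies to arbitrary maps with no continuity hypothesis.
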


Our next goal is to establish that there are many functionals defining slices of small oscillation for a dentable map. The basic result is a version of the superlemma of Asplund\textendash{}Bourgain\textendash{}Namioka as presented in~\cite[Theorem 3.4.1]{B83}.

\begin{lemma} \label{lemma:super}
Let $f\colon C \rightarrow M$ be a uniformly continuous map and $A, B \subset X$ be bounded closed convex subsets such that $A \subset C \subset
\overline{\conv}(A \cup B)$, $\diam(f(A)) < \varepsilon$ and $A \setminus B \not = \emptyset$\@. Then there exists  $H \in {\mathbb H}$ such that $A\cap H  \not = \emptyset$, $B\cap H = \emptyset$ and $\diam(f(C\cap H)) <\varepsilon$\@.
\end{lemma}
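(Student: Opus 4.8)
The plan is to run the classical Asplund--Bourgain--Namioka separation argument on the domain and then transfer the smallness of $f$ across the resulting slice by uniform continuity, exploiting that the inequality $\diam(f(A))<\varepsilon$ is \emph{strict}. First I would fix a point $a_0\in A\setminus B$ and, since $B$ is closed and convex, apply the Hahn--Banach separation theorem to obtain $x^\ast\in X^\ast$ with $x^\ast(a_0)>\sup x^\ast(B)$. Writing $s=\sup\{x^\ast(x):x\in B\}$ and $m=\sup\{x^\ast(x):x\in A\}$, this gives $m\ge x^\ast(a_0)>s$, so the open half-spaces $H_t=\{x\in X:x^\ast(x)>t\}$ with $s\le t<m$ are the natural candidates: each such $H_t$ meets $A$ (because $t<m$) and misses $B$ (because $x^\ast(b)\le s\le t$ for all $b\in B$). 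It then remains to choose $t$ close enough to $m$ to control $\diam(f(C\cap H_t))$.

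The geometric heart of the argument is to show that every point of $C\cap H_t$ lies within small norm-distance of $A$. Since $A,B$ are convex, $\conv(A\cup B)=\{\lambda a+(1-\lambda)b:a\in A,\ b\in B,\ \lambda\in[0,1]\}$, and everything sits inside a bounded set of some diameter $R=\diam(A\cup B)$. For a combination $c=\lambda a+(1-\lambda)b$ with $c\in H_t$ one has $t<x^\ast(c)\le s+\lambda(m-s)$, whence $\lambda>\frac{t-s}{m-s}$; consequently $\norm{c-a}=(1-\lambda)\norm{b-a}\le(1-\lambda)R$, which tends to $0$ as $t\uparrow m$. Thus, given any prescribed $\rho>0$, I can pick $t<m$ so close to $m$ that $\norm{c-a}<\rho$ for all such $c$. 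Passing to the closure is routine: if $c\in C\cap H_t\subset\overline{\conv}(A\cup B)$ and $c_n\to c$ with $c_n\in\conv(A\cup B)$, then $c_n\in H_t$ eventually (as $H_t$ is open), so $\operatorname{dist}(c,A)\le\rho$, and by keeping a little room below $\rho$ one secures an actual $a\in A$ with $\norm{c-a}<\rho$.

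To finish, set $\varepsilon_0=\diam(f(A))<\varepsilon$ and choose $\gamma>0$ with $\varepsilon_0+2\gamma<\varepsilon$. Uniform continuity of $f$ on the bounded set $C$ yields $\rho>0$ such that $\norm{x-y}<\rho$ implies $d(f(x),f(y))<\gamma$. Fixing $t$ as in the previous paragraph for this $\rho$, every $c\in C\cap H_t$ admits $a\in A$ with $\norm{c-a}<\rho$, hence $d(f(c),f(a))<\gamma$. For $c,c'\in C\cap H_t$ with associated $a,a'\in A$, the triangle inequality gives $d(f(c),f(c'))\le d(f(c),f(a))+d(f(a),f(a'))+d(f(a'),f(c'))<\gamma+\varepsilon_0+\gamma<\varepsilon$, so $\diam(f(C\cap H_t))<\varepsilon$ and $H=H_t$ does the work.

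The step I expect to require the most care is the simultaneous tuning of the threshold $t$: it must be pushed close enough to $m=\sup x^\ast(A)$ to force $\lambda$ near $1$ (and hence norm-proximity to $A$ below the uniform-continuity modulus $\rho$), while remaining strictly below $m$ so that $A\cap H_t\neq\emptyset$. The separation $m>s$ obtained from $a_0\in A\setminus B$ is exactly what guarantees a nonempty admissible range for $t$, and the strictness of $\diam(f(A))<\varepsilon$ is precisely what leaves room for the two $\gamma$-errors introduced by passing through $A$ via uniform continuity.
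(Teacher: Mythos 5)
Your proof is correct, and it produces the half-space by a genuinely different mechanism than the paper. The paper introduces the auxiliary convex set $D_r=\{(1-\lambda)y+\lambda z: y\in A,\ z\in B,\ \lambda\in[r,1]\}$, shows $A\setminus\overline{D_r}\neq\emptyset$ via a separating functional, observes that every point of $C\setminus D_r$ is within $r\,\diam(A-B)$ of $A$, and then takes $H$ to be \emph{any} half-space separating a point of $A$ from $\overline{D_r}$ (which automatically misses $B\subset D_r$). You instead fix a single functional $x^*$ separating $a_0\in A\setminus B$ from $B$ and slide the level $t$ of the slice $H_t=\{x\in X: x^*(x)>t\}$ up towards $m=\sup\{x^*,A\}$: your estimate $\lambda>(t-s)/(m-s)$ forces the $A$-weight of any combination in $H_t$ to be near $1$, which is the same norm-proximity-to-$A$ phenomenon as in the paper, parametrized by the slice level $t$ rather than by the paper's $r$. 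Both arguments conclude identically, converting norm-proximity to $A$ into $f$-proximity via uniform continuity, with the strictness of $\diam(f(A))<\varepsilon$ absorbing the two $\gamma$-errors. Your version is somewhat more constructive (one application of Hahn--Banach instead of two, and an explicit slice direction), at the cost of losing the freedom in the direction of $H$ that the paper's separation-from-$\overline{D_r}$ formulation retains --- though the statement of the lemma does not require that freedom, so nothing is lost for the applications. One point where you are actually more careful than the paper: you handle the closure honestly, approximating $c\in C\cap H_t\subset\overline{\conv}(A\cup B)$ by points of $\conv(A\cup B)$ that eventually lie in the open set $H_t$, whereas the paper's proof silently writes $C\setminus D_r\subset\conv(A\cup B)\setminus D_r$ even though the hypothesis only gives $C\subset\overline{\conv}(A\cup B)$, eliding exactly this approximation step.
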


\begin{proof} Take $\eta = 3^{-1} (\varepsilon - \diam(f(A)) )$\@. Let $\delta>0$ be such that if $x,y \in C$ satisfy $\|x-y\|<\delta$ then $d(f(x),f(y)) < \eta$\@.
Consider for some $r \in (0,1]$ the convex set
\[ D_r = \{ (1-\lambda) y + \lambda z: y \in A, z \in B, \lambda \in [r,1] \}. \]

First we claim that $A\setminus \overline{D_r}$ is nonempty. Indeed, let $x^*\in X^*$ be such that $\sup\{x^*, B\}<\sup\{x^*,C\}$, which exists since $A\setminus B$ is nonempty. Then $\sup\{x^*,A\}=\sup\{x^*,C\}$ and thus
\[ \sup\{x^*, \overline{D_r}\} = \sup\{x^*, D_r\} \leq (1-r)\sup\{x^*,A\} + r \sup\{x^*, B\} <\sup\{x^*, A\}.\]
So $A\not\subset \overline{D_r}$\@.

Now note that for $x \in C\setminus D_r \subset \conv(A \cup B) \setminus D_r$ there are $y \in A$, $z \in B$ and $\lambda \in [0,r]$ such that $x=(1-\lambda)y + \lambda z$\@. Therefore, $\|x-y\| \leq r \|y-z\|$\@. If we take
$r \in (0,1]$ such that $r \, \diam(A-B) < \delta$, then $d(f(x),f(y)) <\eta$\@. That implies
\[\diam( f (C\setminus D_r ) ) < \diam(f(A))+ 2 \eta < \varepsilon.\]
Finally, any $H \in {\mathbb H}$ separating points of $A$ from $\overline{D_r}$ will satisfy that $\diam(f(C\cap H)) < \varepsilon$,
as desired. 
\end{proof}

The following result can be proved as the analogous for the RNP (see
\cite[Proposition 3.5.2]{B83}).

\begin{lemma} \label{lemma:slice}
Let $C, D \subset X$ be bounded closed convex subsets such that $C \setminus D \not = \emptyset$ and suppose that $f\colon C \rightarrow M$ is a uniformly continuous dentable map.
Given $\varepsilon>0$, there exists a half-space $H$ such that  $D\cap H =\emptyset$, $C\cap H \not =\emptyset$ and
$\diam(f(C\cap H)) <\varepsilon$\@.
\end{lemma}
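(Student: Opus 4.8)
The engine is the super\-lemma, Lemma~\ref{lemma:super}. I would first reduce the statement to the following cleaner one: \emph{there exist $z^*\in X^*$ and $c\in\mathbb R$ such that $C\cap\{z^*>c\}\neq\emptyset$, $\diam(f(C\cap\{z^*>c\}))<\varepsilon$ and $\sup\{z^*,D\}<\sup\{z^*,C\}$}, where $\{z^*>c\}$ denotes the open half-space $\{x\in X:z^*(x)>c\}$ and the slice is \emph{allowed} to meet $D$. Indeed, granting such $z^*,c$, put $A=\overline{C\cap\{z^*>c\}}$ and $B=\overline{\conv}(D\cup(C\cap\{z^*\leq c\}))$. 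These are bounded closed convex sets with $D\subseteq B$, $A\subseteq C$, and $A\cup B\supseteq C$, hence $C\subseteq\overline{\conv}(A\cup B)$; moreover $\diam(f(A))\le\diam(f(C\cap\{z^*>c\}))<\varepsilon$ by continuity of $f$. Finally $\sup\{z^*,A\}=\sup\{z^*,C\}$ while $\sup\{z^*,B\}\le\max(\sup\{z^*,D\},c)<\sup\{z^*,C\}$, so $A\setminus B\neq\emptyset$. Lemma~\ref{lemma:super} then yields $H\in\mathbb H$ with $C\cap H\neq\emptyset$, $B\cap H=\emptyset$ and $\diam(f(C\cap H))<\varepsilon$; since $D\subseteq B$ we get $D\cap H=\emptyset$, as required.

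It remains to produce the slice. Dentability of $f$ applied to the bounded set $C$ furnishes, for the target $\varepsilon$, a half-space $\{z^*>c\}$ meeting $C$ with $\diam(f(C\cap\{z^*>c\}))<\varepsilon$. If the slicing functional already satisfies $\sup\{z^*,D\}<\sup\{z^*,C\}$, the previous paragraph finishes the proof. The point is thus to guarantee that the small-oscillation slice can be taken in a direction whose supremum over $C$ is not attained on $D$. To this end I would exploit the hypothesis $C\setminus D\neq\emptyset$: since $D$ is closed and convex, there are $x^*\in X^*$ and $\alpha$ with $\sup\{x^*,D\}<\alpha<\sup\{x^*,C\}$, so that the cap $C_\alpha=\{x\in C:x^*(x)\geq\alpha\}$ is a nonempty bounded closed convex subset of $C$ disjoint from $D$. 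Running dentability inside $C_\alpha$ produces small-oscillation slices that live away from $D$, which are the candidates to be converted, via Lemma~\ref{lemma:super} and the separating direction $x^*$, into a slice of the whole of $C$ with the desired property.

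The main obstacle is precisely this conversion: the direction $z^*$ delivered by dentability (be it on $C$ or on $C_\alpha$) is a priori unrelated to the separating direction $x^*$, and a naive attempt to tilt $z^*$ towards $x^*$ moves the slice towards the $x^*$-maximum of $C$, destroying the control on the oscillation. Reconciling the two directions is the heart of the argument, and it is carried out by a maximality procedure exactly as in the linear theory of the Radon\textendash{}Nikod\'ym property (see \cite[Proposition 3.5.2]{B83}): one passes to a maximal closed convex set lying between $D$ and $C$ off which no small-oscillation slice of $C$ can be separated, and then uses the cap construction together with Lemma~\ref{lemma:super} to enlarge it strictly, reaching a contradiction. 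With the slice in hand, the first paragraph completes the proof.
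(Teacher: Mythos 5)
Your first paragraph is correct, and it is in substance the endgame of the paper's own proof: granted a direction $z^*$ with a small-oscillation slice of $C$ and $\sup\{z^*,D\}<\sup\{z^*,C\}$, the sets $A=\overline{C\cap\{z^*>c\}}$ and $B=\overline{\conv}(D\cup(C\cap\{z^*\leq c\}))$ do satisfy all hypotheses of Lemma~\ref{lemma:super} (including $\diam(f(A))<\varepsilon$, since diameters are preserved under closure, and $A\setminus B\neq\emptyset$ via the sup comparison). The genuine gap is that the reduced statement is the entire difficulty of the lemma, and you never prove it. Dentability of $f$ controls the \emph{oscillation} of slices but says nothing about their \emph{position}: nothing rules out that every small-oscillation slice of $C$ lies in directions whose suprema over $C$ and $D$ coincide. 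Your cap idea does not repair this: a slice of $C_\alpha=\{x\in C: x^*(x)\geq\alpha\}$ is the intersection of $C$ with \emph{two} half-spaces, hence not a slice of $C$, and small oscillation of $f$ on $C_\alpha\cap H$ gives no control on $f(C\cap H)$, which may sweep through all of $C$. You identify this obstacle yourself, but the fix you then invoke --- ``a maximality procedure exactly as in~\cite[Proposition 3.5.2]{B83}'' --- is circular, since that proposition is precisely the linear prototype of Lemma~\ref{lemma:slice} and its proof is the content being asked for; moreover it misdescribes that proof, which involves no maximal convex set or Zorn-type enlargement at all.

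What actually closes the gap (in~\cite{B83} and in the paper) is a Bishop--Phelps construction. Set $E=\overline{\conv}(C\cup D)$ and let $F$ be the set of $x\in E$ for which some $x^*$ satisfies $x^*(x)=\sup\{x^*,E\}>\sup\{x^*,D\}$. Bishop--Phelps, applied to functionals separating points of $C$ from $D$ (this is where $C\setminus D\neq\emptyset$ enters), gives $F\neq\emptyset$; one then checks $F\subset C$ and, crucially, $E=\overline{\conv}(F\cup D)$. The decisive move, absent from your sketch, is to apply the dentability of $f$ not to $C$ nor to a cap of $C$ but to $\overline{\conv}(F)$: this produces an open slice $S$ of $\overline{\conv}(F)$ with $\diam(f(\overline{S}))<\varepsilon$, and any such slice must contain points of $F$, whose sup-attaining functionals force $\overline{S}\not\subset B:=\overline{\conv}(D\cup(\overline{\conv}(F)\setminus S))$ --- otherwise $B=E$ and a point of $S\cap F$ would have to lie in $\overline{\conv}(F)\setminus S$. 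Lemma~\ref{lemma:super} with $A=\overline{S}$ and this $B$ then yields the half-space, exactly in the pattern of your first paragraph. So your reduction is sound, but the heart of the proof --- identifying $\overline{\conv}(F)$ as the correct set to dent so that small-oscillation slices are automatically separated from $D$ --- is missing, and the route you gesture at in its place would not produce it.
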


\begin{proof} Take $E =  \overline{\conv}(C \cup D)$ and let
\[F= \{x \in E: \textrm{ there is an } x^* \in X^* \textrm{ such that } x^*(x)=\sup\{x^*,E\} > \sup\{x^*,D\}\}\,.\]
Note that Bishop\textendash{}Phelps theorem ensures the existence of sup-attaining functionals arbitrarily close to functionals separating points of $C$ from $D$, so $F\neq \emptyset$\@. Moreover, $F \subset C$ and $E=\overline{\conv}(F \cup D)$\@. Indeed, the first part is standard (anyway, see~\cite[Theorem 3.5.1]{B83}). If the second inclusion does not hold, separation with Hahn\textendash{}Banach and Bishop\textendash{}Phelps again lead to a new point of $F$ outside from $F$, a contradiction.

Now, find a nonempty open slice $S$ of $\overline{\conv}(F)$ such that $\diam(f(\overline{S})) <\varepsilon$\@. Consider
$B=\overline{\conv}(D \cup (\overline{\conv}(F)\setminus S))$\@. We claim that $S \setminus B \not =\emptyset$\@. Indeed, suppose that
$S \subset B$, which clearly implies that $B=E$\@.
There are $x \in S$ and $x^* \in X^*$ such that $x^*(x)=\sup\{x^*,E\} > \sup\{x^*,D\}$\@. Since we are assuming that
\[E=\overline{\conv}(D \cup (\overline{\conv}(F) \setminus S)),\]
we should have $x \in \overline{\conv}(F) \setminus S$, which is impossible.

Finally, Lemma~\ref{lemma:super} with $A =\overline{S}$ and the same set $B$ provides a half-space $H$ which does not meet $D$ and such that $\diam(f(C \cap H)) < \varepsilon$\@. 
\end{proof}

It is convenient to introduce a specific notation for slices of a set $A\subset X$, namely
\[ S(A,x^*,t)  = \{ x \in A: x^*(x) > \sup\{x^*,A\} - t \}\,, \]
where $x^* \in X^*$ and $t>0$\@.

\begin{definition}
Let $f\colon C \rightarrow M$ be a map, $A$ be a bounded subset of $C$ and $x^* \in X^*$\@. We say that $A$ is $f$\emph{-strongly sliced} by $x^*$ if
\[ \lim_{t \rightarrow 0^+} \diam(f(S(A,x^*,t))) = 0\,, \]
and in such a case we say that $x^*$ is $f$\emph{-strongly slicing} on $A$\@. The set of all the $f$-strongly slicing functionals on $A$ will be denoted $\StS(f,A)$\@.
\end{definition}

Note that the notion of strongly slicing functional is similar to that of strongly exposing. However, a strongly exposing functional is always referred to a point of the set. Here is not the case since, in general, the slices are not converging to a point. That pathology will be studied further in relation with the dentability of sets.

We will need the following lemma.

\begin{lemma}[Lemma 3.3.3 of~\cite{B83}]\label{lemma:bour}
Suppose that $x^* \in X^*$ and $\|x^*\|=1$\@. For $r>0$ denote by $V_r$ the set $rB_X \cap \ker x^*$\@. Assume
that $x_0$ and $y$ are points of $X$ such that $x^*(x_0)>x^*(y)$ and $\|x_0-y\| \leq r/2$\@. If $y^* \in X^*$ satisfies
that $\|y^*\|=1$ and $y^*(x_0) > \sup \{y^*,y+V_r\}$, then $\|x^*-y^*\| \leq \frac{2}{r} \|x_0-y\|$\@.
\end{lemma}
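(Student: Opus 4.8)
The plan is to translate the slicing hypothesis into a quantitative statement about the restriction of $y^*$ to the hyperplane $\ker x^*$, and then to decompose $y^*$ along $x^*$. First I would compute the supremum appearing in the hypothesis: since $V_r = rB_X \cap \ker x^*$, we have $\sup\{y^*, y+V_r\} = y^*(y) + r\,\norm{g}$, where $g$ denotes the restriction $y^*|_{\ker x^*}$ and $\norm{g}$ is its norm as a functional on $\ker x^*$. Hence the condition $y^*(x_0) > \sup\{y^*, y+V_r\}$ is exactly $y^*(x_0 - y) > r\,\norm{g}$. Writing $\delta = \norm{x_0 - y}$ and using $\norm{y^*}=1$ we get $r\,\norm{g} < y^*(x_0-y) \le \delta$, so that $\norm{g} < \delta/r$.

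Next I would decompose $y^*$ relative to $x^*$. Using Hahn\textendash{}Banach, extend $g$ to a functional $\tilde g \in X^*$ with $\norm{\tilde g} = \norm{g} < \delta/r$. Then $y^* - \tilde g$ vanishes on the hyperplane $\ker x^*$, so it is a scalar multiple of $x^*$, say $y^* - \tilde g = \lambda x^*$; thus $y^* = \lambda x^* + \tilde g$. From $\norm{y^*}=1$ and $\norm{x^*}=1$ one reads off $1-\norm{\tilde g} \le |\lambda| \le 1+\norm{\tilde g}$. The whole argument now reduces to showing that $\lambda > 0$: once this is known, the bounds on $|\lambda|$ force $|1-\lambda| \le \norm{\tilde g}$, whence
\[ \norm{x^* - y^*} = \norm{(1-\lambda)x^* - \tilde g} \le |1-\lambda| + \norm{\tilde g} \le 2\norm{\tilde g} < \tfrac{2}{r}\,\norm{x_0-y}, \]
which is the desired estimate.

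The main obstacle, and the only place where the full strength of the hypotheses is used, is pinning down the sign of $\lambda$. Evaluating the decomposition at $x_0 - y$ gives $\lambda\, x^*(x_0-y) = y^*(x_0-y) - \tilde g(x_0-y) > r\,\norm{g} - \norm{g}\,\delta = \norm{g}\,(r-\delta)$. Here the assumption $\norm{x_0-y}\le r/2$ is essential: it guarantees $r-\delta \ge r/2 > 0$, so the right-hand side is nonnegative and therefore $\lambda\, x^*(x_0-y) > 0$. Since $x^*(x_0-y) > 0$ by hypothesis, we conclude $\lambda > 0$. I expect this sign computation to be the delicate point, because the crude consequence $y^*(x_0-y)>0$ alone does not determine the sign of $\lambda$ when $x^*(x_0-y)$ is small; it is precisely the gap $r-\delta \ge r/2$ produced by $\norm{x_0-y}\le r/2$ that rules out the possibility $\lambda<0$ (i.e.\ that $y^*$ points opposite to $x^*$) and closes the argument.
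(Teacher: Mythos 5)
Your proof is correct and complete. There is no in-paper argument to compare against: the paper states this lemma with a citation to \cite{B83} (Bourgin's Lemma 3.3.3) and gives no proof, and your argument is essentially the standard one behind that citation. Every step checks out: the identity $\sup\{y^*,y+V_r\}=y^*(y)+r\norm{g}$ holds because $B_X\cap\ker x^*$ is the (symmetric) unit ball of the hyperplane, so the hypothesis becomes $r\norm{g}<y^*(x_0-y)\le\norm{x_0-y}$; a norm-preserving Hahn\textendash{}Banach extension $\tilde g$ of $g=y^*|_{\ker x^*}$ makes $y^*-\tilde g$ vanish on $\ker x^*$, hence $y^*=\lambda x^*+\tilde g$; the sign computation $\lambda\, x^*(x_0-y)=y^*(x_0-y)-\tilde g(x_0-y)>\norm{g}\bigl(r-\norm{x_0-y}\bigr)\ge 0$ together with $x^*(x_0-y)>0$ forces $\lambda>0$; and then $1-\norm{\tilde g}\le\lambda\le 1+\norm{\tilde g}$ gives $|1-\lambda|\le\norm{\tilde g}$, so $\norm{x^*-y^*}\le|1-\lambda|+\norm{\tilde g}\le 2\norm{g}<\frac{2}{r}\norm{x_0-y}$, which is even slightly stronger (strict) than claimed. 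One small overstatement in your commentary: the hypothesis $\norm{x_0-y}\le r/2$ is not used at full strength. Your sign argument only needs $r-\norm{x_0-y}\ge 0$, i.e.\ $\norm{x_0-y}\le r$, so the factor $1/2$ is slack rather than ``essential''; what is genuinely needed, as you correctly identify, is that the gap $r-\norm{x_0-y}$ be nonnegative, since otherwise the displayed lower bound no longer excludes $\lambda<0$.
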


If $C\subset X$ has the RNP, then the strongly exposing functionals form a dense $\mathcal G_\delta$ subset of $X^*$ (see~\cite[Theorem 3.5.4]{B83}). The next result establishes that an analogous statement holds for dentable maps. 

\begin{theorem} \label{th:ssGdelta}
If $f \in {\mathcal D_U}(C,M)$, then $\StS(f,A)$ is a dense ${\mathcal G}_\delta$ subset of $X^*$ for any nonempty bounded $A \subset C$\@.
\end{theorem}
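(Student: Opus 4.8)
The plan is to realise $\StS(f,A)$ as a countable intersection of open dense subsets of the Banach space $X^*$ and then invoke the Baire category theorem. For each $n \in \mathbb{N}$ set
\[ U_n = \{x^* \in X^* : \text{there is } t>0 \text{ with } \diam(f(S(A,x^*,t))) < 1/n \}. \]
Since $t \mapsto \diam(f(S(A,x^*,t)))$ is nondecreasing, the limit appearing in the definition of strong slicing equals the infimum over $t>0$; hence $x^* \in \StS(f,A)$ if and only if $x^* \in U_n$ for every $n$, that is, $\StS(f,A) = \bigcap_n U_n$. It therefore suffices to prove that each $U_n$ is open and dense.

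Openness I would obtain from elementary stability of slices under perturbation of the functional, with no appeal to dentability. Writing $R = \sup\{\norm{x} : x \in A\}$, if $x^* \in U_n$ is witnessed by $S(A,x^*,t)$ and $\norm{y^*-x^*} < t/(4R)$, then comparing $\sup\{x^*,A\}$ with $\sup\{y^*,A\}$ gives the inclusion $S(A,y^*,t/2) \subseteq S(A,x^*,t)$, so $\diam(f(S(A,y^*,t/2))) \le \diam(f(S(A,x^*,t))) < 1/n$ and $y^* \in U_n$.

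The substance lies in the density of $U_n$, where the dentability of $f$ enters through Lemma~\ref{lemma:slice} and the distance is controlled through Lemma~\ref{lemma:bour}. First I would reduce to $A$ closed, bounded and convex: because $\sup\{x^*,A\} = \sup\{x^*,\overline{\conv}(A)\}$ one has $S(A,x^*,t) \subseteq S(\overline{\conv}(A),x^*,t)$, so the analogue of $U_n$ formed with $\overline{\conv}(A)$ is contained in $U_n$, and it is enough to make the former dense. Fix a target $x^*$ with $\norm{x^*}=1$ and a tolerance $\delta>0$, and put $c = \sup\{x^*,\overline{\conv}(A)\}$. For small $\alpha>0$ I would apply Lemma~\ref{lemma:slice} to $f$ on $\overline{\conv}(A)$ with the \emph{fattened} excluded set
\[ D = \{x \in X : x^*(x) \le c-\alpha\} \cap R'B_X, \]
where $R'$ exceeds the radius of $\overline{\conv}(A)$. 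Since $\overline{\conv}(A) \setminus D = S(\overline{\conv}(A),x^*,\alpha) \neq \emptyset$, the lemma produces a half-space $H = \{y^* > \beta\}$ with $\norm{y^*}=1$, $D \cap H = \emptyset$, $\overline{\conv}(A) \cap H \neq \emptyset$ and $\diam(f(\overline{\conv}(A) \cap H)) < 1/n$; as $\overline{\conv}(A) \cap H$ is a nonempty slice, this places $y^*$ in the convex analogue of $U_n$.

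The remaining and, I expect, decisive step is to force $\norm{y^*-x^*} < \delta$. The key point is that fattening $D$ into a genuine slab makes it contain codimension-one disks $y+V_r$ (with $V_r = rB_X \cap \ker x^*$ as in Lemma~\ref{lemma:bour}), whereas the possibly interior-free set $\overline{\conv}(A)$ need not; this is precisely why I take $D \not\subseteq \overline{\conv}(A)$, which Lemma~\ref{lemma:slice} allows. Concretely, choose $x_0 \in \overline{\conv}(A) \cap H$, so that $x_0 \in S(\overline{\conv}(A),x^*,\alpha)$ and $x^*(x_0) > c-\alpha$. Moving $x_0$ slightly along a direction $w$ with $x^*(w) \ge 1/2$ yields $y = x_0 - \rho w$ with $x^*(y) = c-\alpha$ and $\rho = \norm{x_0-y} \le 2\alpha$; for $R'$ large the disk $y+V_r$ sits inside $D$, hence below the level $\beta < y^*(x_0)$, so $y^*(x_0) > \sup\{y^*, y+V_r\}$. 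Fixing $r$ and taking $\alpha$ so small that $2\alpha \le r/2$, Lemma~\ref{lemma:bour} gives $\norm{x^*-y^*} \le \frac{2}{r}\norm{x_0-y} \le 4\alpha/r < \delta$. Thus each $U_n$ is dense; combined with openness and the Baire category theorem, $\StS(f,A) = \bigcap_n U_n$ is a dense $\mathcal{G}_\delta$. The only further care needed is routine bookkeeping: checking the slices are nonempty and transferring density from $\overline{\conv}(A)$ back to $A$ through the inclusion noted above.
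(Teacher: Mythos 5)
Your proof is correct and follows essentially the same route as the paper: the same sets \(U_n\) combined with Baire's theorem, density via Lemma~\ref{lemma:slice} applied to \(\overline{\conv}(A)\) with a fattened excluded convex set, and the distance estimate via Lemma~\ref{lemma:bour}. The only variations are cosmetic: you take the excluded set to be a slab intersected with a large ball and locate the disk \(y+V_r\) a posteriori near a point of the slice, keeping \(r\) fixed and \(\norm{x_0-y}\le 2\alpha\) small, whereas the paper pre-places the disk inside \(D=\overline{\conv}(A\cup(y+V_r))\cap\{x\in X: x^*(x)\le a\}\) and instead takes \(r\) of order \(\varepsilon^{-1}\); you also spell out the openness of \(U_n\) and the reduction from \(A\) to \(\overline{\conv}(A)\), which the paper leaves implicit.
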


\begin{proof}
For $n\in \mathbb N$ consider the set
\[ U_n = \{x^*\in X^* : \textrm{ there is } t>0 \textrm{ such that } \diam(f(S(A,x^*,t)))<1/n\}. \]
It is not difficult to see that $U_n$ is open. In order to prove that it is also dense, take $x^*\in X^*$ and $0<\varepsilon<1$\@. Pick $x_0\in A$ and $y\in X$ with $x^*(x_0)>a>x^*(y)$ for some $a\in \mathbb R$\@. Now take $r=2\varepsilon^{-1}\sup\{\norm{x-y}:x\in A\}$ and consider
\[ D= \overline{\conv}(A\cup(y+V_r))\cap\{x\in X: x^*(x)\leq a\},\]
where $V_r=rB_X\cap \ker x^*$\@.

Note that $x_0\notin D$, so $\overline{\conv}(A)\setminus D\neq \emptyset$\@. Moreover, $f$ is uniformly continuous on the bounded set $\overline{\conv}(A)$\@. Thus Lemma~\ref{lemma:slice} provides an open half-space $H$ such that $D\cap H=\emptyset$, $\overline{\conv}(A)\cap H\neq \emptyset$ and $\diam(f(\overline{\conv}(A)\cap H))<1/n$\@. Then clearly $A\cap H\neq \emptyset$ and $\diam(f(A\cap H))<1/n$, so $y^*\in U_n$ for $y^*\in S_{X^*}$ being the functional which determines $H$\@. Finally, we will show that $\norm{x^*-y^*}<\varepsilon$\@. For this, take $x_1\in A\cap H$\@. It suffices to show that $y^*(x_1)>\sup\{y^*,y+V_r\}$ and apply Lemma~\ref{lemma:bour}. Notice that $\{x^*\in X^*: x^*(x)>a\}\cap (y+V_r) = \emptyset$ because $x^*(y)<a$\@. Moreover, since $D\cap H=\emptyset$ we have $x^*(x)>a$ for every $x\in (y+V_r)\cap H$\@. So $(y+V_r)\cap H=\emptyset$ and therefore  $y^*(x_1)>\sup\{y^*,y+V_r\}$, which proves that $\norm{x^*-y^*}<\varepsilon$ and finishes the proof of the density of $U_n$\@.

Finally, the set $\StS(f,A) = \bigcap_{n=1}^\infty U_n$ is dense in $X^*$ by Baire theorem, as we want.
\end{proof}

As a consequence we get several corollaries.

\begin{corollary}\label{cor:prodDent}
Let $M_i$ be metric spaces and $f_i  \in {\mathcal D_U}(C,M_i)$ for $i=1,\dotsc,n$\@. Assume that $A \subset C$ is nonempty and bounded. Given $\varepsilon >0$, there exists an open half-space $H \subset X$ such that $A \cap H \not =\emptyset$ and
\[ \max\{ \diam(f_i(A \cap H)): i=1,\dotsc, n\} < \varepsilon. \]
Hence, if we set $M=\prod_{i=1}^n M_i$ endowed with a standard product metric and $f=(f_1,\dotsc,f_n)$, then
$f \in {\mathcal D_U}(C,M)$\@.
\end{corollary}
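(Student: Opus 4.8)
The plan is to reduce the simultaneous smallness of $\diam(f_i(A\cap H))$ for all $i$ to a single application of Theorem~\ref{th:ssGdelta}, exploiting the fact that the strongly slicing functionals form a \emph{dense $\mathcal G_\delta$} set — hence a Baire-generic set that survives finite intersections. First I would fix a nonempty bounded $A\subset C$. For each $i=1,\dots,n$ Theorem~\ref{th:ssGdelta} tells us that $\StS(f_i,A)$ is a dense $\mathcal G_\delta$ subset of $X^*$. Since $X^*$ is a complete metric space, the finite intersection $\bigcap_{i=1}^n \StS(f_i,A)$ is again a dense $\mathcal G_\delta$ by the Baire category theorem, and in particular it is nonempty. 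I would pick any $x^*$ in this intersection.

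For such an $x^*$ we have, for every $i$, that $\lim_{t\to 0^+}\diam(f_i(S(A,x^*,t)))=0$. Given $\varepsilon>0$, for each $i$ there is $t_i>0$ with $\diam(f_i(S(A,x^*,t_i)))<\varepsilon$. Setting $t=\min\{t_1,\dots,t_n\}>0$ and using the obvious monotonicity $S(A,x^*,t)\subset S(A,x^*,t_i)$ for $t\le t_i$, I get $\diam(f_i(S(A,x^*,t)))<\varepsilon$ \emph{simultaneously} for all $i$. The slice $S(A,x^*,t)$ is exactly $A\cap H$ for the open half-space $H=\{x\in X: x^*(x)>\sup\{x^*,A\}-t\}$, and it is nonempty by the definition of a slice; this $H$ witnesses the first assertion.

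For the second assertion, endow $M=\prod_{i=1}^n M_i$ with a standard product metric (for concreteness the max or a fixed $\ell_p$ combination of the coordinate metrics), so that $\diam(f(E))$ is controlled by $\max_i\diam(f_i(E))$ up to a harmless constant. Dentability of $f=(f_1,\dots,f_n)$ then follows immediately from the first part applied to an arbitrary bounded $A\subset C$: the half-space $H$ just produced gives $\diam(f(A\cap H))<C_0\varepsilon$ for a constant $C_0$ depending only on the chosen product metric, and since $\varepsilon$ is arbitrary this yields the dentability condition. Uniform continuity of $f$ on bounded sets is inherited coordinatewise, since each $f_i$ is uniformly continuous and the product metric is equivalent to the max of the coordinate metrics; hence $f\in{\mathcal D_U}(C,M)$.

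I do not anticipate a genuine obstacle here: the whole content is packaged into Theorem~\ref{th:ssGdelta}, and the only point requiring a little care is the passage from ``small diameter for each $f_i$ at its own scale $t_i$'' to ``small diameter for all $f_i$ at a common scale $t$'', which is handled by taking the minimum of the $t_i$ and invoking monotonicity of slices in $t$. The mild bookkeeping about the product metric constant $C_0$ is routine.
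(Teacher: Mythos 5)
Your proposal is correct and follows essentially the same route as the paper, which likewise intersects the dense $\mathcal G_\delta$ sets $\StS(f_i,A)$ via the Baire category theorem and notes that any functional in $\bigcap_{i=1}^n \StS(f_i,A)$ yields the required common slice. Your additional details---taking $t=\min\{t_1,\dotsc,t_n\}$ with monotonicity of slices, and the product-metric and coordinatewise uniform-continuity bookkeeping---are exactly the steps the paper's two-line proof leaves implicit.
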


\begin{proof}
The intersection $\bigcap_{i=1}^n \StS(f_i,A)$ is non-empty by Baire theorem and Theorem~\ref{th:ssGdelta}. Moreover, every element of $\bigcap_{i=1}^n \StS(f_i,A)$ provide slices satisfying the required property.
\end{proof}

Proposition~\ref{prop:equident} and Corollary~\ref{cor:equidual} follow directly from Corollary~\ref{cor:prodDent}.

It is clear that the composition of a dentable map and a uniformly continuous map is dentable. For compositions with continuous maps we have the following result.  

\begin{corollary}\label{cor:composition} Let $g\colon M\to N$ be a continuous map between metric spaces $M$ and $N$ and  $f\in \mathcal D_U(C,M)$\@. Assume that $M$ is complete. Then $g\circ f$ is dentable. 
\end{corollary}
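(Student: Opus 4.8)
The plan is to exploit the fact that, although $g$ is merely continuous, a strongly slicing functional for $f$ produces slices whose $f$-images shrink to a \emph{single point} of $M$; at that point the pointwise continuity of $g$ already suffices to control the oscillation of $g\circ f$. The completeness of $M$ is precisely what guarantees the existence of that point, and this is where the hypothesis will be spent.

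First I would fix a nonempty bounded $A\subset C$ and $\varepsilon>0$, and apply Theorem~\ref{th:ssGdelta} to choose a nonzero $x^*\in \StS(f,A)$ (the set is a dense $\mathcal G_\delta$, hence nonempty). Writing $H_t=\{x\in X: x^*(x)>\sup\{x^*,A\}-t\}$ for the associated open half-spaces, one has $A\cap H_t=S(A,x^*,t)\neq\emptyset$ for every $t>0$, and by the choice of $x^*$, $\diam(f(S(A,x^*,t)))\to 0$ as $t\to 0^+$. Since the slices decrease as $t$ decreases, the sets $\overline{f(S(A,x^*,t))}$ form a nested family of nonempty closed subsets of $M$ whose diameters tend to $0$.

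Next, using the completeness of $M$ together with Cantor's intersection theorem (applied along any sequence $t_n\downarrow 0$, the monotonicity making the choice of sequence irrelevant), I would conclude that $\bigcap_{t>0}\overline{f(S(A,x^*,t))}$ reduces to a single point $p\in M$. In particular $p\in\overline{f(S(A,x^*,t))}$ for every $t$, so as soon as $\diam(f(S(A,x^*,t)))<\delta$ one automatically gets $f(S(A,x^*,t))\subset B_M(p,\delta)$.

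Finally, given the target $\varepsilon$ I would invoke the continuity of $g$ at $p$ to pick $\delta>0$ with $g(B_M(p,\delta))\subset B_N(g(p),\varepsilon/2)$, and then take $t$ small enough that $\diam(f(S(A,x^*,t)))<\delta$. For such $t$ the open half-space $H_t$ meets $A$ and satisfies $(g\circ f)(A\cap H_t)=g(f(S(A,x^*,t)))\subset g(B_M(p,\delta))\subset B_N(g(p),\varepsilon/2)$, whence $\diam((g\circ f)(A\cap H_t))<\varepsilon$; this is exactly the dentability condition for $g\circ f$. The one delicate step is the passage to the limit point $p$: without completeness the $f$-images of the slices could fail to accumulate at any point of $M$, and the merely pointwise continuity of $g$ would give no control whatsoever, which is why the assumption on $M$ is indispensable. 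Note also that $g\circ f$ is only asserted to be dentable, not uniformly continuous, consistently with the fact that $g$ is not assumed uniformly continuous.
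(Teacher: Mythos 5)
Your proof is correct and follows essentially the same route as the paper's: choose $x^*\in\StS(f,A)$ via Theorem~\ref{th:ssGdelta}, use completeness of $M$ to obtain a point in $\bigcap_{t>0}\overline{f(S(A,x^*,t))}$, and exploit the pointwise continuity of $g$ at that point to control $\diam((g\circ f)(S(A,x^*,t)))$ for small $t$. Your additional remarks (Cantor's intersection theorem yielding a single point, and the harmless $\le\varepsilon$ versus $<\varepsilon$ slack) are fine and change nothing of substance.
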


\begin{proof}
Take $A\subset C$ nonempty and bounded and fix $\varepsilon>0$\@. Let $x^*$ be an $f$-strongly slicing functional on $A$, which exists by Theorem~\ref{th:ssGdelta}. Since $M$ is complete, there exists $y_0\in \bigcap_{t>0} \overline{f(S(A,x^*,t))}$\@. Now, take $\delta>0$ such that $d(g(y),g(y_0))<\varepsilon$ whenever $y\in M$ and $d(y,y_0)<\delta$\@. Then there is $t>0$ so that $\diam(f(S(A,x^*,t)))=\diam(\overline{f(S(A,x^*,t))})<\delta$\@. It follows that $\diam((g\circ f)(S(A,x^*,t))<2\varepsilon$\@. 
\end{proof}

\begin{lemma}\label{lemma:binop}
Let $*$ be a binary operation on $M$ which is uniformly continuous on bounded sets. Then $f*g \in {\mathcal D_U}(C,M)$ whenever
$f,g \in {\mathcal D_U}(C,M)$\@.
\end{lemma}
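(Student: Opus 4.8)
The plan is to verify the two defining properties of $\mathcal D_U(C,M)$ separately for the map $f*g$, where $(f*g)(x)=f(x)*g(x)$. The uniform continuity on bounded sets is routine, while the dentability is where the real content lies; the key observation is that dentability of $f*g$ should be deduced not from $f$ and $g$ individually but from the \emph{pair} $(f,g)$, so that a single half-space controls both coordinates simultaneously.

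First I would treat the uniform continuity. Fix a bounded set $A\subset C$ and $\varepsilon>0$. Since $f$ and $g$ are uniformly continuous on bounded sets, they send bounded sets to bounded sets, so $f(A)$ and $g(A)$ lie in some bounded subset of $M$. As $*$ is uniformly continuous on bounded sets, there is $\eta>0$ such that $d(u*v,\,u'*v')<\varepsilon$ whenever $u,u'\in f(A)$ and $v,v'\in g(A)$ satisfy $d(u,u')<\eta$ and $d(v,v')<\eta$. Choosing $\delta>0$ so that $\norm{x-y}<\delta$ forces both $d(f(x),f(y))<\eta$ and $d(g(x),g(y))<\eta$ for $x,y\in A$, one gets $d((f*g)(x),(f*g)(y))<\varepsilon$, i.e.\ $f*g$ is uniformly continuous on $A$.

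For the dentability I would invoke Corollary~\ref{cor:prodDent} with $f_1=f$ and $f_2=g$: it yields $(f,g)\in\mathcal D_U(C,M\times M)$, where $M\times M$ carries a standard product metric. Then $f*g=*\circ(f,g)$ is the composition of the dentable pair with the operation $*$, and it remains to push dentability through this composition. Concretely, given a nonempty bounded $A\subset C$ and $\varepsilon>0$, the image $(f,g)(A)$ is bounded, so uniform continuity of $*$ on bounded sets furnishes $\eta>0$ with $d(u*v,\,u'*v')<\varepsilon$ whenever $(u,v),(u',v')\in(f,g)(A)$ are within $\eta$ in the product metric. Dentability of $(f,g)$ then provides a half-space $H$ with $A\cap H\neq\emptyset$ and $\diam((f,g)(A\cap H))<\eta$, whence $\diam((f*g)(A\cap H))\le\varepsilon$. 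Since $\varepsilon$ was arbitrary, $f*g$ is dentable, and together with the first part this gives $f*g\in\mathcal D_U(C,M)$.

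The crux — and the step I expect to be the only genuine obstacle — is obtaining a \emph{single} slice that is simultaneously of small $f$-oscillation and small $g$-oscillation; this is precisely what Corollary~\ref{cor:prodDent} delivers, resting in turn on the density of the strongly slicing functionals from Theorem~\ref{th:ssGdelta} and a Baire-category intersection. Beyond that, the only point requiring care is that $*$ is merely uniformly continuous \emph{on bounded sets}, so one must keep attention confined to the bounded image $(f,g)(A)$, using that uniformly continuous maps on bounded sets preserve boundedness; no further difficulty arises.
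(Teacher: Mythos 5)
Your proof is correct and is essentially the paper's own argument: the paper obtains a single half-space of small oscillation for both $f$ and $g$ by choosing a functional that is simultaneously $f$- and $g$-strongly slicing via Theorem~\ref{th:ssGdelta} and a Baire intersection --- exactly what your appeal to Corollary~\ref{cor:prodDent} packages --- and then transfers smallness of diameter through the uniform continuity of $*$ on the bounded image, just as you do. One phrasing to tighten: a uniformly continuous map need \emph{not} send bounded sets to bounded sets in general metric spaces, so for the boundedness of $f(A)$ and $g(A)$ you should argue, as the paper does, that a uniformly continuous function on a bounded \emph{convex} set is bounded, applied on $\conv(A)\subset C$ (a chaining argument along segments).
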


\begin{proof}
Note that a uniformly continuous function on a convex bounded set is bounded. This fact and the hypothesis on the operation $*$ imply that $f * g$ is uniformly continuous on bounded sets whenever $f,g$ are. Now, if $A \subset C$ is bounded and nonempty, find $x^* \in X^*$
such that it is simultaneous $f$-strongly slicing and $g$-strongly slicing, which exists by Theorem~\ref{th:ssGdelta}. Given $\varepsilon>0$, by using the uniform continuity of $*$ on $f(A)$ we can find $\delta>0$ such that $\max\{ \diam(U), \diam(V)\}<\delta$  for $U,V \subset f(A)$  implies that $\diam(U * V) < \varepsilon$\@. Thus, if $H \subset X$ is a half-space such that $A \cap H \neq \emptyset$ with $\diam(f(A \cap H)) <\delta$ and $\diam(g(A \cap H)) <\delta$, then $\diam((f*g)(A \cap H)) <\varepsilon$\@.
\end{proof}

Now we can give the proof of Theorem~\ref{th:structure}.

\begin{proof}[Proof of Theorem~\ref{th:structure}.] Lemma~\ref{lemma:binop} yields the first statement. Now, assume that $C$ is also bounded. Note that the boundedness and convexity of $C$ together the uniform continuity implies that every map in ${\mathcal D_U}(C,M)$ is bounded, so we may consider the uniform metric on this set.

Let $f\colon C \rightarrow M$ be a map that can be uniformly approximated by maps from ${\mathcal D_U}(C,M)$\@. Clearly, $f$ is uniformly continuous. We will see that $f$ is moreover dentable. Indeed, fix $\varepsilon>0$ and take $g \in {\mathcal D_U}(C,M)$ such that $d_\infty(f,g) <\varepsilon/3$\@. If $A \subset C$ is nonempty, then there is $H \subset X$ a half-space such that $\diam(g(A \cap H)) <\varepsilon/3$ and $A \cap H \not =\emptyset$\@. The triangle inequality yields that $\diam(f(A \cap H)) <\varepsilon$\@.

Thus, ${\mathcal D_U}(C,M)$  is closed for uniform convergence, and therefore, if $M$ is complete, then ${\mathcal D_U}(C,M)$ is complete too. From what we have proved follows that $\mathcal{D}_U(C,M)$ is a Banach space wherever $M$ is. Finally, the last statement is a straightforward consequence of Lemma~\ref{lemma:binop}.
\end{proof}

Let us remark that Schachermayer proved in~\cite{S85} that there exist sets $C_1$ and $C_2$ with the RNP such that $C_1+C_2$ contains an isometric copy of the closed unit ball of $c_0$ and thus $C_1+C_2$ does not have the RNP. This implies that the sum of two strong Radon-Nikod\'ym operators need not to be strong Radon-Nikod\'ym (an operator is said to be strong Radon-Nikod\'ym if the image of the closed unit ball has the RNP).

We finish the section by showing that uniformly continuous dentable maps satisfy a mixing property analogous to the one of $\mathcal{DC}$ functions (see~\cite[Lemma 4.8]{VZ89}). We will need the following elementary lemma.

\begin{lemma}\label{lemma:connected} Let $A$ be a connected space and let $\rho$ be a pseudometric on $A$\@. Assume that there exist closed subsets $A_1,\dotsc, A_n$ of $M$ such that $A=\cup_{i=1}^n A_i$ and $\diam(A_i)\leq \varepsilon$ for each $i$\@. Then $\diam(A) \leq n\varepsilon$\@.
\end{lemma}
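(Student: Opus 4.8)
The plan is to reduce the statement to a chaining argument: to bound $\diam(A)$ it suffices to show $\rho(x,y) \le n\varepsilon$ for arbitrary $x,y \in A$, and I would obtain this by walking from $x$ to $y$ through a sequence of the sets $A_i$ whose consecutive members overlap, paying at most $\varepsilon$ on each piece visited. The combinatorial device encoding which walks are available is the \emph{intersection graph} $G$ on the vertex set $\{1,\dots,n\}$ (after discarding the empty $A_i$, and treating $A=\emptyset$ as trivial), where $i$ and $j$ are joined by an edge exactly when $A_i \cap A_j \neq \emptyset$.

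The hard part will be to prove that $G$ is connected; this is precisely where the topological hypotheses enter. I would argue by contradiction: if $G$ were disconnected, partition $\{1,\dots,n\}$ into two nonempty sets $I$ and $J$ with no edges between them, and set $U = \bigcup_{i\in I} A_i$ and $V = \bigcup_{j\in J} A_j$. Since each $A_i$ is closed in $A$ and the unions are finite, both $U$ and $V$ are closed; they are nonempty and cover $A$; and the absence of edges between $I$ and $J$ means $U \cap V = \bigcup_{i\in I,\, j \in J}(A_i \cap A_j) = \emptyset$. Thus $U,V$ would be a separation of $A$ into two nonempty disjoint closed sets, contradicting connectedness. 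Closedness of the $A_i$ is used essentially in this step, whereas the pseudometric $\rho$ plays no role here.

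With $G$ connected, I would finish as follows. Fix $x,y \in A$, choose indices with $x \in A_{i_0}$ and $y \in A_{i_k}$, and take a simple path $i_0, i_1, \dots, i_k$ in $G$, so that $k \le n-1$. For each consecutive pair pick $z_j \in A_{i_{j-1}} \cap A_{i_j}$, which is nonempty by the definition of an edge. Then each consecutive pair in the list $x, z_1, \dots, z_k, y$ lies in a common $A_i$, hence has $\rho$-distance at most $\varepsilon$, and the triangle inequality for $\rho$ gives
\[ \rho(x,y) \le \rho(x,z_1) + \sum_{j=1}^{k-1}\rho(z_j,z_{j+1}) + \rho(z_k,y) \le (k+1)\varepsilon \le n\varepsilon. \]
Taking the supremum over $x,y \in A$ yields $\diam(A) \le n\varepsilon$. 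The only points needing care are the degenerate cases ($A$ empty, some $A_i$ empty, or $x,y$ lying in a common piece, i.e.\ $k=0$), all of which are immediate.
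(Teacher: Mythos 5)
Your proof is correct, but it takes a recognizably different route from the paper's. The paper argues greedily: fixing $x$, it constructs points $x_1=x,x_2,x_3,\dots$ together with an enumeration $\sigma$ of $\{1,\dots,n\}$, invoking connectedness afresh at every step --- the two closed sets $\bigcup_{j<i}A_{\sigma(j)}$ and $\bigcup_{m\notin\{\sigma(1),\dots,\sigma(i-1)\}}A_m$ cover $A$, hence must meet --- to pick $x_i$ in the overlap, and it shows inductively that $\rho(x_i,x)\leq (i-1)\varepsilon$; since $\sigma$ exhausts all indices, some $A_{\sigma(i)}$ contains $y$ and $\rho(x,y)\leq\rho(y,x_i)+\rho(x_i,x)\leq n\varepsilon$ follows. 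You instead factor the argument into a purely topological step (the intersection graph of a finite closed cover of a connected space is connected, proved once, by turning a disconnection of the graph into a separation of $A$ by two nonempty disjoint closed sets) and a purely metric step (chaining along a simple path, using only the triangle inequality for $\rho$). The paper's enumeration $\sigma$, with $x_i\in A_{\sigma(i)}\cap\bigcup_{j<i}A_{\sigma(j)}$, is in effect a constructive witness of your graph's connectivity, so the two proofs share the same combinatorial core; what your decomposition buys is modularity, explicit treatment of the degenerate cases the paper glosses over (empty pieces, $x$ and $y$ in a common piece), and a marginally sharper estimate: you obtain $\rho(x,y)\leq(k+1)\varepsilon$ with $k$ the graph distance between the pieces containing $x$ and $y$, i.e.\ $\diam(A)\leq(\diam(G)+1)\varepsilon$, which can improve on $n\varepsilon$ when the nerve of the cover is well connected. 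You also silently repair the statement's typo: the $A_i$ must of course be closed subsets of $A$, not of $M$, and your separation argument uses exactly that reading.
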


\begin{proof}
Fix $x,y\in A$\@. Take $x_1=x$ and let $\sigma(1)\in\{1,\dotsc,n\}$ be such that $x\in A_{\sigma(1)}$\@. Since $A$ is connected, there is $x_2\in A_{\sigma(1)}\cap (\cup_{i\neq \sigma(1)} A_i)$, and so $\rho(x_2,x)\leq \varepsilon$\@. Take $\sigma(2)\neq \sigma(1)$ such that $x_2\in A_{\sigma(2)}$\@. Now take $x_3 \in (A_{\sigma(1)}\cup A_{\sigma(2)})\cap (\cup_{i\in \{1,\dotsc,n\}\setminus \{\sigma(1),\sigma(2)\}} A_i)$\@. Then either $\rho(x_3,x_2)\leq \varepsilon$ or $\rho(x_3, x)\leq \varepsilon$, so in any case $\rho(x_3,x)\leq 2\varepsilon$\@. By iterating this process we get $\sigma(i)$ and $x_{i}$ for each $i=1,\dotsc, n$ satisfying that $\sigma(i)\in\{1,\dotsc,n\}\setminus \{\sigma(1),\dotsc,\sigma(i-1)\}$, $x_i\in A_{\sigma(i)}\cap (\cup_{j< i} A_{\sigma(j)})$ and $\rho(x_i,x)\leq (i-1)\varepsilon$\@. Thus $\sigma$ defines a bijection on $\{1,\dotsc,n\}$ and so there exists $i$ such that $y\in A_{\sigma(i)}$\@. Therefore, $\rho(x,y)\leq \rho(y, x_i)+\rho(x_i, x)\leq n\varepsilon$, as desired. 
\end{proof}

\begin{proposition}\label{prop:mixing} Assume that $f_1,\dotsc, f_n \in \mathcal{D}_U(C, M)$ and $f\colon C\to M$ is a continuous map such that $f(x)\in \{f_1(x),\dotsc, f_n(x)\}$ for every $x\in C$\@. Then $f\in \mathcal{D}_U(C, M)$\@.
\end{proposition}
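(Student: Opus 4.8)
The plan is to verify the two defining properties of $\mathcal D_U(C,M)$ separately---uniform continuity on bounded sets and dentability---with the same engine in both cases: Lemma~\ref{lemma:connected} applied to the pseudometric $\rho(x,y)=d(f(x),f(y))$ together with the decomposition of a connected set into the pieces $A_i=\{x: f(x)=f_i(x)\}$. These pieces are closed in the norm topology, since they are the preimages of the diagonal of $M\times M$ under the continuous maps $x\mapsto(f(x),f_i(x))$, and on $A_i$ one has $\rho(x,y)=d(f_i(x),f_i(y))$. Hence the $\rho$-oscillation of $f$ on each piece is controlled by the ordinary oscillation of the corresponding $f_i$, which is what makes Lemma~\ref{lemma:connected} applicable.

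For uniform continuity, I would fix a bounded set and $\varepsilon>0$, and use uniform continuity of the finitely many $f_i$ to choose $\delta>0$ so that $\|u-v\|<\delta$ forces $d(f_k(u),f_k(v))<\varepsilon/n$ for every $k$. Given $x,y$ in the convex hull of the bounded set with $\|x-y\|<\delta$, I would apply Lemma~\ref{lemma:connected} to the segment $[x,y]\subset C$, which is connected and covered by the closed sets $A_k\cap[x,y]$, each of $\rho$-diameter at most $\varepsilon/n$. This gives $d(f(x),f(y))\le\diam([x,y])\le\varepsilon$ for the pseudometric $\rho$. The point worth stressing is that one cannot compare $f(x)=f_i(x)$ and $f(y)=f_j(y)$ directly when $i\ne j$; interpolating along the segment and invoking the connectedness lemma is precisely what bridges the different branches.

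For dentability, I would fix a nonempty bounded $A\subset C$ and $\varepsilon>0$ and pass to $\tilde A=\overline{\conv}(A)$. Theorem~\ref{th:ssGdelta} together with Baire's theorem (equivalently, Corollary~\ref{cor:prodDent}) provides $x^*$ that is simultaneously $f_i$-strongly slicing on $\tilde A$ for all $i$, hence a threshold $t>0$ with $\diam(f_i(S))<\varepsilon/n$ for every $i$, where $S=S(\tilde A,x^*,t)$. The slice $S$ is convex, hence connected, and is covered by the closed sets $A_i\cap S$, each of $\rho$-diameter $<\varepsilon/n$, so Lemma~\ref{lemma:connected} yields $\diam(f(S))<\varepsilon$ (taking the threshold slightly smaller secures the strict inequality). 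Writing $S=\tilde A\cap H$ for the open half-space $H=\{x:x^*(x)>\sup\{x^*,\tilde A\}-t\}$ and using $\sup\{x^*,A\}=\sup\{x^*,\tilde A\}$, I get $A\cap H\ne\emptyset$ and $f(A\cap H)\subset f(S)$, so $\diam(f(A\cap H))<\varepsilon$, as required.

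The main obstacle is exactly the mismatch between branches at the transition points, which defeats the naive termwise estimates for both properties; the resolution in each case is to interpolate over a connected set---a segment for continuity, a convex slice for dentability---and then feed the decomposition into Lemma~\ref{lemma:connected}. A secondary technical point is that slices of an arbitrary bounded $A$ need not be connected, which is why one works on $\tilde A=\overline{\conv}(A)$ while recovering $A\cap H\ne\emptyset$ from the equality of the suprema of $x^*$ over $A$ and over $\tilde A$.
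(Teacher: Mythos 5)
Your proof is correct and follows essentially the same route as the paper: decompose into the closed branch sets $A_i=\{x: f(x)=f_i(x)\}$, obtain a half-space slicing all the $f_i$ simultaneously (the paper invokes Proposition~\ref{prop:equident}, you invoke Theorem~\ref{th:ssGdelta} with Baire, which is the same mechanism via Corollary~\ref{cor:prodDent}), and conclude with Lemma~\ref{lemma:connected} applied to the pseudometric $\rho=d\circ f$ on a segment for uniform continuity and on a slice for dentability. In fact you are more careful than the paper at one point: the published proof applies Lemma~\ref{lemma:connected} directly to $A\cap H$, which for an arbitrary bounded $A\subset C$ need not be connected, whereas your passage to $\tilde A=\overline{\conv}(A)$ --- whose slices are convex, hence connected, and with $A\cap H\neq\emptyset$ recovered from $\sup\{x^*,A\}=\sup\{x^*,\tilde A\}$ --- makes the connectedness hypothesis genuinely available and thereby repairs a small oversight in the paper's write-up.
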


\begin{proof}
First notice that $f$ is uniformly continuous on bounded sets. Indeed, let $A\subset C$ be bounded, fix $\varepsilon>0$ and take $\delta>0$ so that $d(f_i(x)-f_i(y))\leq n^{-1}\varepsilon$ for every $i=1,\dotsc, n$ whenever $x,y\in A$ and $\norm{x-y}\leq\delta$\@.  Consider the closed sets $A_i = \{x\in A : f(x) = f_i(x)\}$\@. Now, if $x,y\in A$ satisfy that $\norm{x-y}\leq \delta$ then $\diam(A_i\cap[x,y])\leq \delta$ and thus $\diam(f(A_i\cap[x,y]))\leq  n^{-1}\varepsilon$ for every $i=1,\dotsc, n$\@. Now apply Lemma~\ref{lemma:connected} to the connected set $[x,y] = \cup_{i=1}^n A_i\cap[x,y]$ with the pseudometric $\rho=d\circ f$ to get that $d(f(x),f(y))\leq \varepsilon$\@.

In order to show that $f$ is dentable, take a nonempty bounded subset $A$ of $C$ and fix $\varepsilon>0$\@. By Proposition~\ref{prop:equident} there is $H\in \mathbb H$ satisfying that $\max\{\diam(f_i (A\cap H)): i=1,\dotsc, n\} \leq n^{-1}\varepsilon$\@. The results follows by applying Lemma~\ref{lemma:connected} to $A\cap H = \cup_{i=1}^n A_i\cap A\cap H$\@. 
\end{proof}

\section{Characterisations of dentability}\label{sec:3}

We will begin the section by showing the relation between the dentability of a set and the dentability of maps defined on it. The proof of our result is based on the following theorem which goes back to Huff and Morris~\cite{HM76} (see also~\cite{GOOT04} for a more general version):  a set $D$ is dentable if and only if it has open slices whose Kuratowski index of non-compactness is arbitrarily small. Let us recall that the Kuratowski index of non-compactness of a set $D\subset X$ is given by
\[ \alpha(D) = \inf\{\varepsilon>0 : \exists x_1,\dotsc, x_n \in X, D\subset \bigcup_{i=1}^n B(x_i, \varepsilon)\}. \]

We denote by $\omega^{<\omega}$ the set of finite sequences of natural numbers. The length of a finite sequence $s$ is denoted by $|s|$\@. Given $s,t\in \omega^{<\omega}$, we denote by $s\frown t$ the concatenation of $s$ and $t$\@.

\begin{proposition}\label{prop:charRNPdent}
Let $C\subset X$ be a closed convex set. Then the following are equivalent:
\begin{enumerate}[(i)]
\item the set $C$ has the RNP;
\item for every metric space $(M,d)$, every continuous map $f\colon C\to M$ is dentable;
\item every Lipschitz function $f\colon C\to \mathbb R$ is dentable.
\end{enumerate}
\end{proposition}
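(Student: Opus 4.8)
The plan is to prove the cyclic chain of implications $(i) \Rightarrow (ii) \Rightarrow (iii) \Rightarrow (i)$, where the middle implication is essentially free and the real content lies in the two endpoints.

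For $(i) \Rightarrow (ii)$, I would argue that if $C$ has the RNP and $f \colon C \to M$ is any continuous map, then $f$ is dentable. Fix a nonempty bounded $A \subset C$ and $\varepsilon > 0$. The key observation is that the RNP gives us more than mere dentability of the identity: by the Huff--Morris theorem cited just above, a dentable set has slices of arbitrarily small Kuratowski index $\alpha$. So I would take a slice $S = C \cap H$ meeting $A$ with $\alpha(S)$ very small — in particular $S$ can be covered by finitely many balls $B(x_i,\delta)$ for a $\delta$ to be chosen. Continuity of $f$ does not by itself control $\diam(f(S))$ because $f$ need only be continuous, not uniformly so; this is the point where I expect the main obstacle to lie. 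The natural fix is to iterate: within $S$ one passes to a sub-slice on which $f$ has small oscillation, using that $\alpha(S)$ small forces $S$ to be ``almost compact'', and on a region of small diameter a continuous map is close to uniformly continuous. I would make this precise by a tree construction indexed by $\omega^{<\omega}$ (this is surely why that notation was introduced), building a nested sequence of slices whose $\alpha$-indices shrink and passing to a point of $\bigcap_t \overline{f(S_t)}$; the completeness issues and the continuity of $f$ at the limit point then force a slice of small $f$-oscillation. The delicate part is arranging the tree so that continuity alone, rather than uniform continuity, suffices to conclude.

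The implication $(ii) \Rightarrow (iii)$ is immediate: a Lipschitz function $f \colon C \to \mathbb{R}$ is in particular a continuous map into the metric space $\mathbb{R}$, so $(ii)$ applies directly. I would state this in one line.

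For $(iii) \Rightarrow (i)$, I would argue the contrapositive. Suppose $C$ fails the RNP; equivalently the identity $\mathbb{I} \colon C \to (C, \norm{\cdot})$ fails to be dentable, so there is a bounded $A \subset C$ and $\varepsilon_0 > 0$ such that every slice $C \cap H$ meeting $A$ has $\diam(A \cap H) \geq \varepsilon_0$. I must manufacture from this a single Lipschitz function $f \colon C \to \mathbb{R}$ that is not dentable, i.e.\ for which some bounded set admits no slice of small $f$-oscillation. The natural candidate is a Lipschitz function that separates points at scale $\varepsilon_0$ on the non-dentable set — for instance built from distances to a suitably chosen subset, or a sup of normalized affine functionals witnessing the failure of dentability. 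The crux is to ensure $f$ has oscillation bounded below on every slice while remaining globally Lipschitz; I would exploit that the failure of dentability yields a bush or a separated tree of points inside $A$ and define $f$ by interpolating incompatible values along it. The main obstacle here is guaranteeing that the lower bound on oscillation survives intersection with \emph{every} half-space, which requires that the separation witnessed by the non-dentability be genuinely spread out geometrically rather than concentrated where a single slice could isolate it; the $\varepsilon$-bush characterization of non-RNP sets should provide exactly this uniform spreading.

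Overall, I expect $(iii) \Rightarrow (i)$ to be the technically cleaner direction once the right Lipschitz function is identified, while $(i) \Rightarrow (ii)$ carries the principal difficulty, precisely because upgrading small Kuratowski index to small $f$-oscillation for a merely continuous (not uniformly continuous) $f$ demands the iterated tree-of-slices argument rather than a one-step slicing.
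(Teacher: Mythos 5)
Your overall skeleton (the cycle $(i)\Rightarrow(ii)\Rightarrow(iii)\Rightarrow(i)$, the triviality of the middle leg, and a distance function to a separated tree for the last leg) is the right shape, but both nontrivial implications have genuine gaps, and you have deployed the Huff--Morris theorem in the wrong leg. For $(i)\Rightarrow(ii)$ no Kuratowski index is needed and your proposed mechanism would fail: a set of small diameter (or small index $\alpha$) does \emph{not} make a merely continuous $f$ ``close to uniformly continuous'' on it --- $f$ can oscillate arbitrarily on a tiny set, e.g.\ like $\sin(1/\norm{x-x_0})$ near a point $x_0$ not in the set --- and the limit point you seek in $\bigcap_t \overline{f(S_t)}$ lives in $M$, an arbitrary metric space with no completeness hypothesis, so that intersection may be empty. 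Moreover, slices of slices are not slices, so your nested tree of slices cannot even be set up without the single-functional device. The paper's route avoids all of this: RNP makes the identity $\mathbb I\colon C\to C$ a uniformly continuous dentable map into the \emph{complete} space $C$ (closed in $X$), and Corollary~\ref{cor:composition} then gives dentability of $f=f\circ\mathbb I$ for every continuous $f$. The mechanism there is one $\mathbb I$-strongly slicing functional $x^*$ from Theorem~\ref{th:ssGdelta}: the slices $S(A,x^*,t)$ are automatically nested (same functional, shrinking $t$), their norm diameters tend to $0$, completeness of $C$ yields a point $y_0\in\bigcap_{t>0}\overline{S(A,x^*,t)}$, and continuity of $f$ at the single point $y_0$ controls the oscillation on a sufficiently small slice. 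This anchoring at a point of the domain is the missing idea in your sketch.

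For $(iii)\Rightarrow(i)$, Huff--Morris is exactly where it is needed and you did not use it there. The paper takes $\varepsilon>0$ such that \emph{every} open slice of the non-dentable bounded set $A$ has Kuratowski index greater than $\varepsilon$; this is what permits choosing, at each node $x_s$, children in $A$ avoiding all the finitely many balls $B(x_t,\varepsilon)$ with $|t|<|s|$ while almost reconstituting $x_s$ as a convex combination, whence $\norm{x_t-x_s}\geq\varepsilon$ for all pairs at \emph{different levels}, not merely parent--child. A plain $\varepsilon$-bush gives only parent--child separation, which does not suffice: for $f=d(\cdot,O)$ (with $O$ the odd levels) to oscillate by $\varepsilon/2$ on every slice one needs $d(O,E)\geq\varepsilon/2$, i.e.\ separation between arbitrary, possibly incomparable, nodes of different parity --- precisely the ``uniform spreading'' you hoped the bush characterization would supply, and which it does not. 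Finally, you omit the perturbation step (Lemma 5.10 of~\cite{BL00}) replacing the approximate tree $(x_s)$ by an exact one, $y_s=\sum_m\mu_{s\frown m}\,y_{s\frown m}$ with $\norm{x_s-y_s}\leq\varepsilon/4$: only exactness guarantees that every open half-space containing $y_s$ contains some child $y_{s\frown m}$, no matter how thin the slice; with approximate combinations a sufficiently thin slice can isolate a node from all of its children, and the lower bound on the slice oscillation of $f$ collapses.
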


\begin{proof}
Notice that $C$ is a complete metric space as being a closed subset of $X$\@. Moreover, if $C$ has the RNP then the identity $\mathbb I\colon C\to C$ is dentable. Therefore $(i)\Rightarrow (ii)$ follows from Corollary~\ref{cor:composition}.  Moreover, clearly $(ii)$ implies $(iii)$\@. Now, we use an argument from~\cite{C98} to prove $(iii)\Rightarrow (ii)$. Assume that there is a bounded subset $A$ of $C$ which is not dentable. Then there exists $\varepsilon>0$ such that any open slice of $A$ has Kuratowski's index of non-compactness greater than $\varepsilon$\@. We will define a tree $T\subset \omega^{<\omega}$ and sequences $(x_s)_s\subset A$, $(\lambda_s)\subset [0,1]$ and $(n_s)_s\in \mathbb N$ indexed in $T$ satisfying
\begin{enumerate}
\item[1)] $\sum_{n=1}^{n_s} \lambda_{s\frown n} =1$,
\item[2)] $\norm{x_s - \sum_{n=1}^{n_s} \lambda_{s\frown n} x_{s\frown n}} < \frac{\varepsilon}{2^{|s|+2}}$, and
\item[3)] $\norm{x_t - x_{s}}\geq \varepsilon$ for each $t$ such that $|t|< |s|$
\end{enumerate}
for each $s\in T$\@. In order to show that, choose $x_\emptyset \in A$ arbitrarily. Now, assume $x_s$ has been previously defined. Then $x_s\in \overline{\conv}(A\setminus \cup_{|t|< |s|} B(x_t, \varepsilon))$\@. Indeed, otherwise there would be an open halfspace $H$ satisfying $x_s\in A\cap H \subset \cup_{|t| < |s|} B(x_t, \varepsilon)$ and thus $\alpha(A\cap H)<\varepsilon$, a contradiction. Thus, there exists $n_s\in \mathbb N$, $\lambda_{s\frown n}\geq 0$ and $x_{s\frown n}\in A$ satisfying above conditions.

In addition, a standard argument (see for instance Lemma 5.10 in~\cite{BL00}) provides sequences $(y_s)_s\subset A$, $(\mu_s)\subset [0,1]$ and $(m_s)_s$ satisfying
\begin{enumerate}
\item[1')] $\sum_{m=1}^{m_s} \mu_{s\frown m} =1$,
\item[2')] $y_s = \sum_{m=1}^{m_s} \mu_{s\frown m} y_{s\frown m}$, and
\item[3')] $\norm{x_s - y_s}\leq \varepsilon/4$ 
\end{enumerate}
for each $s\in T$\@. Thus,
\begin{equation}\label{eq:ineq}
\norm{y_t- y_{s\frown n}}\geq \norm{x_t - x_{s\frown n}}-\norm{x_t-y_t}-\norm{x_{s\frown n}-y_{s\frown n}}\geq \varepsilon/2
\end{equation}
whenever $t\leq s$\@.

Finally, take
\begin{align*}
 O &= \{y_{s} : |s| \text{ is odd}\},  &  E &= \{y_{s} : |s| \text{ is even}\}
\end{align*}
and notice that~\ref{eq:ineq} implies that $d(O, E)\geq \varepsilon/2$\@. Consider the function $f\colon C\to \mathbb R$ given by $f(x)=d(x, O)$, which is a Lipschitz function. Then $f$ is not dentable. Indeed, take $H\in \mathbb H$ satisfying $A\cap H\neq \emptyset$ and fix some $y_s\in A\cap H$\@. By condition 2' above, there is $m\leq m_s$ such that $y_{s\frown m}\in H$\@. Since either $y_s\in O, y_{s\frown m} \in E$ or $y_s\in E, y_{s\frown m} \in O$, it follows
\[ \diam(f(O\cap H))\geq |d(y_s, O)-d(y_{s\frown m}, O)|\geq \varepsilon/2, \]
as we want.
\end{proof}

Next we study finitely dentable maps, which were introduced by the second named author in~\cite{R08}. Let us recall the definition. For any dentable map $f\colon C \rightarrow M$ defined on a bounded closed convex set we may consider the following ``derivation''
\[ [D]'_{\varepsilon} = \{ x \in D: \diam(f(D \cap H))>\varepsilon, \ \forall H \in {\mathbb H}, x \in H \}\,. \]
Consider the sequence of sets indexed by ordinals defined inductively by
\[ [C]_{\varepsilon}^{\alpha+1}=[[C]_{\varepsilon}^{\alpha}]'_{\varepsilon}\]
and by $[C]_{\varepsilon}^{\alpha}= \bigcap_{\beta<\alpha}[C]_{\varepsilon}^{\beta}$ if $\alpha$ is a limit ordinal.
The dentability of $f$ implies that $[C]_{\varepsilon}^{\alpha}=\emptyset$ for some ordinal, so we may consider the ordinal index
\[ \Dz(f,\varepsilon) = \inf \{ \alpha: [C]_{\varepsilon}^{\alpha}=\emptyset \}\,. \]
We say that $f$ is \emph{finitely dentable} if $\Dz(f,\varepsilon) < \omega$ for every $\varepsilon>0$, and we
say that $f$ is \emph{countably dentable} if $\Dz(f,\varepsilon) < \omega_1$ for every $\varepsilon>0$\@.
Note that if $C$ is separable then any dentable map defined on it is countably dentable.

Let us mention that any slice $H$ which does not meet $[D]'_\varepsilon$ satisfies $\diam(f(D \cap H))\leq 2\varepsilon$, that is, Lancien's midpoint argument (see, e.g.~\cite{Lancien93}) applies also in the non-linear context. Indeed, assume that $x,y\in D$ satisfies that the segment $[x,y]$ does not meet $[D]_\varepsilon'$\@. Then we can consider the sets
\begin{align*}
A &= \{ z\in [x,y] : \exists H\in \mathbb H, [x,z]\subset H, \diam(f(D \cap H))\leq \varepsilon\},\\
B &= \{ z\in [x,y] : \exists H\in \mathbb H, [z,y]\subset H, \diam(f(D \cap H))\leq \varepsilon\}.
\end{align*}
Note that $A$ and $B$ are relatively open in $[x,y]$. A connectedness argument provides a point $z\in A\cap B$, and thus $d(f(x),f(y))\leq 2\varepsilon$\@. This fact was used repeatedly  in~\cite{R08} to provide a characterisation of finitely dentable Lipschitz maps in terms of a renorming of the Banach space. Indeed, a slight modification of the proof of that theorem shows that the same result holds for unifomly continuous maps. This result will be useful in the next section.

\begin{proposition}\label{prop:renormfindent} Let $f\colon C\to M$ be a uniformly continuous map defined on a bounded closed convex set. Then the following are equivalent:
\begin{enumerate}[(i)]
\item the map $f$ is finitely dentable;
\item there exists an equivalent norm $\eqnorm{\cdot}$ on $X$ satisfying $\lim_n d(f(x_n),f(y_n))=0$ whenever the sequences $(x_n),(y_n)\subset C$ are such that
\[ \lim_{n\to \infty} 2\eqnorm{x_n}^2+2\eqnorm{y_n}^2 - \eqnorm{x_n+y_n}^2 = 0. \]
\end{enumerate}
\end{proposition}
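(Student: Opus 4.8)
The plan is to prove the two implications separately, with the bulk of the work going into constructing the renorming in $(i) \Rightarrow (ii)$. The condition in $(ii)$ is a reformulation of what it means for the norm $\eqnorm{\cdot}$ to be such that $f$ is ``uniformly continuous'' with respect to the modulus built from the derivation indices; concretely, the quantity $2\eqnorm{x_n}^2 + 2\eqnorm{y_n}^2 - \eqnorm{x_n+y_n}^2$ is (up to the factor coming from the parallelogram defect) a measure of how close the midpoint $\frac{1}{2}(x_n+y_n)$ is to lying on the ``sphere'' through $x_n$ and $y_n$, so $(ii)$ says that points which are approximately midpoints in the $\eqnorm{\cdot}$ geometry are mapped close together by $f$. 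This is exactly the analytic content of Lancien's midpoint argument recalled just before the statement.

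For $(i) \Rightarrow (ii)$, I would follow the construction in~\cite{R08}, adapting it from the Lipschitz to the uniformly continuous setting. Since $f$ is finitely dentable, for each $\varepsilon = 2^{-k}$ the derivation terminates after finitely many steps, producing a finite decreasing chain $C = [C]^0_{\varepsilon} \supset [C]^1_{\varepsilon} \supset \dotsb \supset [C]^{m}_{\varepsilon} = \emptyset$. The idea is to use these chains to define, for each $k$, a symmetric convex body (equivalently a seminorm or a bounded convex function) that increases in ``thickness'' as one passes from one layer of the derivation to the next, and then to combine these across $k$ by a weighted series $\eqnorm{x}^2 = \norm{x}^2 + \sum_k 4^{-k} c_k(x)$, choosing the weights so that the series converges to an equivalent norm. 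The key point to verify is that if $2\eqnorm{x_n}^2 + 2\eqnorm{y_n}^2 - \eqnorm{x_n+y_n}^2 \to 0$, then for each fixed layer the segment $[x_n, y_n]$ eventually avoids $[C]'_{\varepsilon}$ in the relevant sense, so the midpoint estimate $d(f(x_n), f(y_n)) \le 2\varepsilon$ applies and lets us push $d(f(x_n), f(y_n))$ below any prescribed threshold.

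For the converse $(ii) \Rightarrow (i)$, I would argue contrapositively: if $f$ is not finitely dentable, then for some $\varepsilon > 0$ the set $[C]^n_{\varepsilon}$ is nonempty for every $n < \omega$, which produces, via the tree-type construction already used in the proof of Proposition~\ref{prop:charRNPdent}, sequences $(x_n), (y_n)$ that are approximate $\eqnorm{\cdot}$-midpoints of a common point yet satisfy $d(f(x_n), f(y_n)) \ge \varepsilon$, contradicting $(ii)$. The mechanism is that any equivalent norm has a bounded parallelogram defect on bounded sets, so the nontermination of the derivation forces points deep in the tree whose midpoint configuration makes the left-hand side small while $f$ separates them by at least $\varepsilon$.

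The main obstacle I expect is the $(i) \Rightarrow (ii)$ direction, and specifically ensuring that the renorming construction from~\cite{R08}, which was carried out for Lipschitz maps, goes through verbatim under the weaker hypothesis of uniform continuity on bounded sets. The Lipschitz constant enters the original argument in quantitative estimates controlling how much $f$ can vary across a slice in terms of the geometric diameter; replacing it, one must track the modulus of uniform continuity $\delta(\cdot)$ instead of a single constant and confirm that the resulting choices of weights $c_k$ and the layer-by-layer thickening still yield an equivalent (rather than merely bounded or degenerate) norm and the stated limit behaviour. Since the authors explicitly assert that ``a slight modification of the proof of that theorem shows that the same result holds for uniformly continuous maps,'' I would present the construction in enough detail to make this modification transparent, and cite~\cite{R08} for the parts that transfer without change.
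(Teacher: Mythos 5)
Your direction $(i)\Rightarrow(ii)$ is essentially the paper's own argument: both defer to the renorming construction of~\cite{R08}, with the Lipschitz constant replaced by the modulus $\delta(\varepsilon)=\inf\{\norm{x-y}: x,y\in C,\ d(f(x),f(y))\geq\varepsilon\}$; the paper builds the symmetric convex function $F(x)^2=\sum_{k=1}^\infty\sum_{n=1}^{N_k}\frac{2^{-k}}{N_k}d(x,[C]_{2^{-k}}^n)^2+\sum_{k=1}^\infty\sum_{n=1}^{N_k}\frac{2^{-k}}{N_k}d(x,-[C]_{2^{-k}}^n)^2$ and derives the quantitative convexity defect $F(\frac{x+y}{2})^2\leq\frac{F(x)^2+F(y)^2}{2}-\frac{\varepsilon\delta(\varepsilon/4)^2}{128\Dz(f,\varepsilon/8)^3}$ whenever $d(f(x),f(y))>\varepsilon$, so your plan (including the warning about tracking the modulus of uniform continuity) is sound there.

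The genuine gap is in $(ii)\Rightarrow(i)$. The ``tree-type construction already used in the proof of Proposition~\ref{prop:charRNPdent}'' requires a bounded set that is \emph{not dentable} with respect to $f$, i.e.\ a set all of whose slices have $f$-oscillation at least $\varepsilon$; failure of \emph{finite} dentability provides no such set. A map can be dentable, even countably dentable, and still fail to be finitely dentable: the paper itself exploits exactly this in the proof of Theorem~\ref{th:deltaconvex} ($(iv)\not\Rightarrow(iii)$), where Cepedello's theorem yields a Lipschitz function on $B_{\ell_1}$ that is countably but not finitely dentable. For such an $f$ your bush/martingale mechanism cannot even be started, yet $(ii)$ must still fail, so the contrapositive route you sketch breaks down at its first step. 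The paper's proof is instead direct and purely geometric: assuming $0\in C$ and setting $M=\sup\{\eqnorm{x}:x\in C\}$, one first shows from $(ii)$ that for each $\varepsilon>0$ there is $\delta>0$, uniform in $r\in[0,M]$, such that $\diam(f(B_{\eqnorm{\cdot}}(0,r+\delta)\cap H))<\varepsilon$ whenever $H\in\mathbb H$ misses $B_{\eqnorm{\cdot}}(0,r)$. Indeed, if this failed there would be $r_m$, half-spaces $H_m$, and points $x_m,y_m\in B_{\eqnorm{\cdot}}(0,r_m+1/m)\cap H_m$ with $d(f(x_m),f(y_m))\geq\varepsilon$; since $H_m$ is convex, $\frac{1}{2}(x_m+y_m)\in H_m$ forces $\eqnorm{\frac{x_m+y_m}{2}}\geq r_m$, whence $2\eqnorm{x_m}^2+2\eqnorm{y_m}^2-\eqnorm{x_m+y_m}^2\leq 4(r_m+1/m)^2-4r_m^2\to 0$, contradicting $(ii)$. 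This slice-in-a-thin-shell configuration, not a tree, is where the approximate midpoints of hypothesis $(ii)$ actually enter. One then shows by induction that $[C]_\varepsilon^n\subset B_{\eqnorm{\cdot}}(0,M-n\delta)$: a point $x\in[C]_\varepsilon^n$ with $\eqnorm{x}>M-(n+1)\delta$ can be strictly separated by some $H\in\mathbb H$ from the ball $B_{\eqnorm{\cdot}}(0,r)$ with $M-(n+1)\delta\leq r<\eqnorm{x}$, and the claim gives $\diam(f([C]_\varepsilon^n\cap H))<\varepsilon$, so $x\notin[C]_\varepsilon^{n+1}$. Hence $\Dz(f,\varepsilon)\leq M\delta^{-1}$, which is finite dentability directly, with no construction under the negation needed.
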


\begin{proof}
Let $f$ be a finitely dentable uniformly continuous map. For each $\varepsilon>0$ consider
\[ \delta(\varepsilon) = \inf\{ \norm{x-y} : x,y\in C, d(f(x),f(y))\geq \varepsilon\} \]
Let $N_k = \Dz(f,2^{-k})$ and consider the $2$-Lipschitz symmetric convex function $F$ defined on $X$ by the formula
\[ F(x)^2 = \sum_{k=1}^\infty \sum_{n=1}^{N_k} \frac{2^{-k}}{N_k}d(x,[C]_{2^{-k}}^n)^2+  \sum_{k=1}^\infty \sum_{n=1}^{N_k} \frac{2^{-k}}{N_k}d(x,-[C]_{2^{-k}}^n)^2. \]
Now go through the same steps as in the proof of~\cite[Theorem 2.2]{R08} to get that if $x,y\in C$ and $d(f(x),f(y))>\varepsilon$, then
\[ F\left(\frac{x+y}{2}\right)^2 \leq \frac{F(x)^2+F(y)^2}{2} - \frac{\varepsilon\delta(\varepsilon/4)^2}{128 \Dz(f,\varepsilon/8)^3}. \]
Therefore, an equivalent norm defined as in the proof of~\cite[Theorem 2.2]{R08} does the work.

Conversely, assume that $\eqnorm{\cdot}$ is an equivalent norm satisfying the property in the statement. We may assume that $0\in C$\@. Take $M=\sup\{\eqnorm{x}:x\in C\}$ and fix $\varepsilon>0$\@. It is not difficult to show that there exists $\delta>0$ such that $\diam(f(B_{\eqnorm{\cdot}}(0, r+\delta)\cap H)< \varepsilon$ whenever $H\in \mathbb H$ does not intersect $B_{\eqnorm{\cdot}}(0, r)$\@. Thus, $[C]_\varepsilon^n \subset B_{\eqnorm{\cdot}} (0, M-n\delta)$ for each $n$, so $\Dz(f,\varepsilon)\leq M\delta^{-1}$\@. 
\end{proof}

We will finish the section by studying the relation between the dentability of a map $f$ with values in a normed space and the dentability of the function $\norm{f}$\@. The following corollary was inspired by the absoluteness of difference convexity (see~\cite[Theorem 2.9]{BB11}).

\begin{corollary} Assume that $f\colon C\rightarrow \mathbb R$ is uniformly continuous on bounded sets. Then $f$ is dentable if and only if $|f|$ is dentable.
\end{corollary}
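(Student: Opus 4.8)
The plan is to split the equivalence into its two implications, the forward one being a routine composition argument and the converse resting entirely on the mixing property established in Proposition~\ref{prop:mixing}.

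For the forward implication, suppose $f$ is dentable. Since the absolute value $|\cdot|\colon\mathbb{R}\to\mathbb{R}$ is $1$-Lipschitz, hence uniformly continuous, and since the composition of a dentable map with a uniformly continuous map is again dentable (as noted right before Corollary~\ref{cor:composition}), the function $|f|=|\cdot|\circ f$ is dentable. This direction does not even require the uniform continuity of $f$.

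For the converse, suppose $|f|$ is dentable. As $f$ is uniformly continuous on bounded sets, so is $|f|$, whence $|f|\in\mathcal{D}_U(C,\mathbb{R})$; and by the vector space structure from Theorem~\ref{th:structure} we also have $-|f|\in\mathcal{D}_U(C,\mathbb{R})$. The key observation is then purely pointwise: for every $x\in C$ one has $f(x)\in\{|f|(x),-|f|(x)\}$, since $f(x)=|f(x)|$ when $f(x)\geq 0$ and $f(x)=-|f(x)|$ when $f(x)\leq 0$. Because $f$ is continuous, Proposition~\ref{prop:mixing} applied with $f_1=|f|$ and $f_2=-|f|$ yields $f\in\mathcal{D}_U(C,\mathbb{R})$; in particular $f$ is dentable.

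The only delicate point is the converse, and it is precisely the sign ambiguity: having slices on which $|f|$ has small oscillation does not by itself control $f$, because on such a slice $f$ could still take values near $+c$ and near $-c$, producing oscillation close to $2c$. What saves the argument is that $f$ switches between the two branches $\pm|f|$ only across the zero set of $f$, so the continuity of $f$ rules out uncontrolled jumps along the relevant connected pieces. This is exactly the mechanism encapsulated by Proposition~\ref{prop:mixing} (via Lemma~\ref{lemma:connected}), so the essential work is already done there, and the corollary follows by specialising the mixing proposition to the two dentable companions $|f|$ and $-|f|$ of $f$. I expect no further obstacle: once $\pm|f|\in\mathcal{D}_U(C,\mathbb{R})$ and the pointwise membership $f(x)\in\{|f|(x),-|f|(x)\}$ are in hand, the conclusion is immediate.
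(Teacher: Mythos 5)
Your proof is correct and follows essentially the same route as the paper: the forward direction is the trivial composition with the $1$-Lipschitz absolute value, and the converse applies Proposition~\ref{prop:mixing} to the pair $\pm|f|$ using the pointwise identity $f(x)\in\{|f|(x),-|f|(x)\}$. Your added remarks (that $-|f|\in\mathcal{D}_U(C,\mathbb R)$ via the vector space structure, and the discussion of the sign-switching mechanism) merely make explicit what the paper leaves implicit.
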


\begin{proof}
It is clear that $|f|$ is dentable whenever $f$ is dentable. The converse statement follows from Proposition~\ref{prop:mixing} and the fact that $f(x)\in \{|f(x)|, -|f(x)|\}$\@. 
\end{proof}

Notice that the above result fails when the modulus is replaced by the norm for dentable maps. Indeed, the identity map $\mathbb I \colon c_0 \rightarrow c_0$ is not dentable whereas $\norm{\cdot}\circ \mathbb I = \norm{\cdot}$ is dentable as being a continuous convex function which is bounded on bounded sets. 

The next result shows that it is possible to construct such an example even if the target space is two-dimensional.

\begin{proposition} Assume that $C$ is a bounded closed convex set which does not have the RNP. Then there exists a non-dentable Lipschitz map $f\colon C\rightarrow (\mathbb R^2,\norm{\cdot}_1)$ such that $\norm{f(x)}_1=1$ for every $x\in C$\@.
\end{proposition}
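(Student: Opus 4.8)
The plan is to recycle the combinatorial tree produced in the proof of Proposition~\ref{prop:charRNPdent} and then route it around the unit sphere of $(\mathbb R^2,\norm{\cdot}_1)$, which is a Lipschitz copy of a circle. Since $C$ fails the RNP, it contains a bounded non-dentable subset $A$, and from the proof of Proposition~\ref{prop:charRNPdent} I would extract the tree $(y_s)_{s\in T}\subset A$, the constant $\varepsilon>0$, and the parity sets $O=\{y_s:|s|\text{ odd}\}$ and $E=\{y_s:|s|\text{ even}\}$, keeping the two facts that make that argument work: every $y_s$ is a convex combination $y_s=\sum_{m} \mu_{s\frown m} y_{s\frown m}$ of its immediate successors (property (2')), and $d(O,E)\geq \varepsilon/2$ (a consequence of condition 3) together with $\norm{x_s-y_s}\leq \varepsilon/4$). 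Everything I need is already contained in that proof, so this first step costs nothing new.

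Next I would separate $O$ from $E$ by a bounded Lipschitz scalar function and compose it with a Lipschitz path on the sphere. Concretely, set $u(x)=\min\{1,\tfrac{2}{\varepsilon}\,d(x,O)\}$, which is Lipschitz, takes the value $0$ on $O$ and, since $d(x,O)\geq \varepsilon/2$ for $x\in E$, takes the value $1$ on $E$. Let $\gamma\colon[0,1]\to\mathbb R^2$ be $\gamma(t)=(1-2t,\,1-|1-2t|)$; a direct check gives $\norm{\gamma(t)}_1=1$ for all $t$, so $\gamma$ is a Lipschitz path inside the unit sphere with endpoints $\gamma(0)=(1,0)$ and $\gamma(1)=(-1,0)$. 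Then $f:=\gamma\circ u$ is Lipschitz as a composition of Lipschitz maps, and $\norm{f(x)}_1=1$ for every $x\in C$, so $f$ has the required form.

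Finally I would check non-dentability with witnessing set $A'=\{y_s:s\in T\}$ and threshold $2$. If $H=\{x^*>c\}\in\mathbb H$ meets $A'$, pick $y_{s_0}\in A'\cap H$; expanding $x^*(y_{s_0})=\sum_m \mu_{s_0\frown m}x^*(y_{s_0\frown m})>c$ forces some successor $y_{s_0\frown m}\in H$. As $|s_0|$ and $|s_0\frown m|$ have opposite parity, one of these two points lies in $O$ and the other in $E$, so $u$ sends them to $0$ and $1$ and $f$ sends them to $(1,0)$ and $(-1,0)$; hence $\diam(f(A'\cap H))\geq \norm{(1,0)-(-1,0)}_1=2$. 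Thus no slice of $A'$ has small $f$-oscillation and $f$ is not dentable. I do not expect a genuine obstacle here: the only points needing care are that the target really is the $\ell_1$-sphere (handled by the explicit $\gamma$, the sphere being a Lipschitz circle) and that each slice picks up both parities, which rests entirely on the convexity property (2') of the tree. The construction is, in effect, the scalar argument of Proposition~\ref{prop:charRNPdent} pushed onto a one-dimensional sphere.
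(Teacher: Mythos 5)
Your proof is correct, but it takes a more laborious route than the paper's, which is essentially a two-line reduction: by Proposition~\ref{prop:charRNPdent} there is a non-dentable Lipschitz function $g\colon C\to\mathbb R$, and after normalising so that $h:=\diam(C)^{-1}(g-\inf g)$ takes values in $[0,1]$ one simply sets $f=(h,1-h)$. This lands on the flat face $\{(a,1-a):a\in[0,1]\}$ of the $\ell_1$-sphere, and non-dentability transfers instantly because the first-coordinate projection is $1$-Lipschitz: a slice on which $f$ has small oscillation is one on which $h$, hence $g$, does. You instead reopen the proof of Proposition~\ref{prop:charRNPdent}, extract the tree $(y_s)$, the parity sets $O,E$ with norm-distance $d(O,E)\geq\varepsilon/2$, and the convexity property (2'), and then verify non-dentability by hand; all of this is sound --- the convex-combination identity really does force an immediate successor of $y_{s_0}$ into any open half-space containing $y_{s_0}$, the parity switch makes $f$ take both values $(1,0)$ and $(-1,0)$ on every slice of $A'$, and your $u$ and $\gamma$ are Lipschitz with $\gamma$ valued in the sphere. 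Your route buys two things: a quantitatively sharp conclusion (every slice of the witness set has $f$-image of diameter $2$, the full diameter of the sphere), and it quietly repairs a small normalisation wrinkle in the paper's argument (from $g(0)=0$ and $1$-Lipschitz alone one only gets $\diam(C)^{-1}g\in[-1,1]$, while $\norm{(a,1-a)}_1=1$ requires $a\geq 0$; your truncation $\min\{1,\cdot\}$ forces $u\in[0,1]$ outright). The cost is redundancy: the \emph{statement} of Proposition~\ref{prop:charRNPdent} already hands you a non-dentable Lipschitz scalar function, and since your path $\gamma$ is bi-Lipschitz onto its image (its first coordinate is $1-2t$, so $\gamma$ admits a $\tfrac{1}{2}$-Lipschitz inverse there), $\gamma\circ u$ is non-dentable as soon as $u$ is --- so the entire tree extraction could be replaced by one citation plus that observation. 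Note finally that the paper's map is precisely your path restricted to a single face of the sphere: $(h,1-h)=\gamma\bigl((1-h)/2\bigr)$.
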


\begin{proof}
Let $g\colon C\to \mathbb R$ be a non-dentable Lipschitz function, which exists by Proposition~\ref{prop:charRNPdent}. We may assume that $g$ is $1$-Lipschitz and $g(0)=0$\@. Then the function given by
\[f(x)=(\diam(C)^{-1}g(x),1-\diam(C)^{-1}g(x))\]
does the work. 
\end{proof}

\section{Relation with $\mathcal{DC}$ functions and maps}

A function $f\colon C\to \mathbb R$ is said to be $\mathcal {DC}$ (or delta-convex) if it can be represented as the difference of two convex continuous functions on $C$, and it is said to be \emph{$\mathcal {DC}$-Lipschitz} (resp.~\emph{$\mathcal{DC}$-bounded}) if it is the difference of two convex Lipschitz (resp.~bounded) functions. There is a large literature about $\mathcal {DC}$ functions and its applications in analysis and optimization. Indeed, optimization problems involving $\mathcal{DC}$ functions appear frequently in engineering, economics and other sciences.  The reader is referred to the surveys~\cite{BB11,H85,T95} for relevant results, examples and applications of $\mathcal {DC}$ functions.

Let us remark that the space of $\mathcal {DC}$ functions on a convex set is not closed. Indeed, every continuous function on a norm-compact set can be uniformly approximated by $\mathcal {DC}$ functions, as a consequence of Stone\textendash{}Weierstrass theorem. On the other hand, it was shown in~\cite{Frontisi95} that for every infinite-dimensional Banach space there is a  continuous function defined on the unit ball which can not be uniformly approximated by $\mathcal {DC}$ functions uniformly on the unit ball. The situation is different if we restrict our attention to Lipschitz functions. Cepedello Boiso (\cite{C98}, see also~\cite[Theorem 5.1.25]{BW10} and~\cite[Theorem 4.21]{BL00}) characterised superreflexive spaces as those Banach spaces in which every Lipschitz function defined on it can be approximated uniformly on bounded sets by $\mathcal{DC}$ functions which are Lipschitz on bounded sets. That result was extended by the second named author in~\cite{R08} by showing that a Lipschitz function defined on a bounded closed convex set is finitely dentable if and only if is uniform limit of $\mathcal {DC}$-Lipschitz functions.  Moreover, in~\cite{CZ14} it is studied approximation by $\mathcal{DC}$ functions on a compact subset of a locally convex space, via the Kakutani\textendash{}Krein theorem.

We need to recall some definitions. A subset $D \subset X$ is said to be a \emph{$({\mathcal C} \setminus {\mathcal C})_{\sigma}$-set}
if $D=\bigcup_{n=1}^{\infty} (A_{n} \setminus B_{n})$, where $A_{n}$ and $B_{n}$ are convex closed subsets of
$X$\@. A real function $f\colon C \rightarrow {\mathbb R}$ is said to be \emph{$({\mathcal C} \setminus {\mathcal C})_{\sigma}$-measurable}
if the sets $f^{-1}(-\infty,r)$ and $f^{-1}(r,+\infty)$ are both $({\mathcal C} \setminus {\mathcal C})_{\sigma}$ subsets of $X$ for each $r\in \mathbb R$\@.

The following result summarises the connection between dentability and approximation by $\mathcal{DC}$ functions for uniformly continuous functions.

\begin{theorem}\label{th:deltaconvex}
Let $f\colon C \rightarrow {\mathbb R}$ be a uniformly continuous function defined on a bounded closed convex set.
Consider the following  statements:
\begin{enumerate}[(i)]
\item $f$ is uniform limit of $\mathcal {DC}$-bounded functions;
\item $f$ is finitely dentable;
\item $f$ is uniform limit of $\mathcal {DC}$-Lipschitz functions;
\item $f$ is countably dentable;
\item $f$ is $({\mathcal C} \setminus {\mathcal C})_{\sigma}$-measurable;
\item $f$ is pointwise limit of $\mathcal {DC}$-Lipschitz functions.
\end{enumerate}
Then $(i) \Leftrightarrow (ii) \Leftrightarrow (iii) \Rightarrow (iv) \Rightarrow (v) \Rightarrow (vi) \not \Rightarrow (v) \not \Rightarrow (iv) \not \Rightarrow (iii)$\@.
\end{theorem}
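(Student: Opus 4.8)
The plan is to prove the cycle $(i)\Rightarrow(ii)\Rightarrow(iii)\Rightarrow(i)$, then the one-way chain $(iii)\Rightarrow(iv)\Rightarrow(v)\Rightarrow(vi)$, and finally to exhibit the three counterexamples. Several links are immediate: $(iii)\Rightarrow(i)$ holds because on a bounded set every Lipschitz function is bounded, so every $\mathcal{DC}$-Lipschitz function is $\mathcal{DC}$-bounded and a uniform limit of the former is a uniform limit of the latter; and $(ii)\Rightarrow(iv)$ is trivial since $\omega<\omega_1$. The equivalence $(ii)\Leftrightarrow(iii)$ is the analytic heart and upgrades~\cite[Theorem 2.2]{R08} from Lipschitz to uniformly continuous maps. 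For $(iii)\Rightarrow(ii)$ I would first record that a convex function $g$ which is bounded on $C$ is finitely dentable with $\Dz(g,\varepsilon)\le \operatorname{osc}_C(g)/\varepsilon+1$: separating any $x$ with $g(x)>\sup_D g-\varepsilon$ from the closed convex set $\{g\le \sup_D g-\varepsilon\}$ by Hahn--Banach gives a half-space $H\ni x$ with $\diam(g(D\cap H))\le\varepsilon$, so each derivation lowers the supremum by $\varepsilon$ and the process stops after finitely many steps. A $\mathcal{DC}$-Lipschitz map is a difference of two such $g$'s and is itself uniformly continuous, so I may apply Proposition~\ref{prop:renormfindent} to each summand and combine the resulting renormings by $\eqnorm{\cdot}^2=\eqnorm{\cdot}_1^2+\eqnorm{\cdot}_2^2$; since $2\eqnorm{x}^2+2\eqnorm{y}^2-\eqnorm{x+y}^2$ splits as a sum of two nonnegative terms, the hypothesis of Proposition~\ref{prop:renormfindent}(ii) passes to the difference, and every $\mathcal{DC}$-Lipschitz map is finitely dentable. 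Uniform limits are then handled by the transfer estimate: $\norm{f-g}_\infty\le\eta$ forces $[D]'_{\varepsilon,f}\subseteq[D]'_{\varepsilon-2\eta,g}$ at every derived set $D$, whence $\Dz(f,\varepsilon)\le\Dz(g,\varepsilon-2\eta)$. For $(ii)\Rightarrow(iii)$ I would run the construction of~\cite[Theorem 2.2]{R08}, feeding the norm from Proposition~\ref{prop:renormfindent} into an inf-convolution regularisation against $\eqnorm{\cdot}^2$, with uniform convergence now coming from the uniform continuity of $f$ rather than from a Lipschitz bound.

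The implication $(i)\Rightarrow(ii)$ closing the cycle is where the real difficulty lies, and I expect it to be the hardest single step. The naive route---``$\mathcal{DC}$-bounded is finitely dentable, then transfer''---fails: a difference $u-v$ of convex \emph{bounded} functions need not be finitely dentable, because slicing off a point requires separating it from $\conv(\{u\le u(x)-\varepsilon\}\cup\{v\le v(x)-\varepsilon\})$, and a point simultaneously near-maximal for $u$ and for $v$ may lie in that hull, so no single half-space controls $\diam((u-v)(D\cap H))$. This is precisely the ``intersection of two half-spaces is not a half-space'' obstruction behind Theorem~\ref{th:ssGdelta}, which produces one good slice but no bound on the index. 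Nor can the renorming of Proposition~\ref{prop:renormfindent}(ii) be manufactured from $u,v$ alone, since that property self-improves to uniform continuity while the convex summands need not be uniformly continuous. The plan is therefore to feed the uniform continuity of the limit $f$ back into the estimate, exploiting that it turns $f$-separation into norm-separation ($\norm{x-y}\ge\delta(\varepsilon)>0$ whenever $|f(x)-f(y)|\ge\varepsilon$): either by a direct transfinite count of the derivation of $f$ in which each step peels a sublevel slab of $u$ or of $v$ while the gap $\delta(\varepsilon)$ prevents the layers from accumulating, or by showing outright that a uniformly continuous uniform limit of $\mathcal{DC}$-bounded functions is already a uniform limit of $\mathcal{DC}$-Lipschitz ones.

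For $(iv)\Rightarrow(v)$ I would induct transfinitely on $\Dz(f,1/n)$, a countable ordinal by hypothesis. At stage $\alpha$ the removed set $[C]^\alpha_{1/n}\setminus[C]^{\alpha+1}_{1/n}$ is covered by slices $C\cap H$ (convex, hence differences of convex sets) of oscillation at most $1/n$; on such a slice $\{f<r\}$ is everything, nothing, or is resolved at the finer scale $1/(n{+}1)$, and a countable ordinal yields only countably many stages, so intersecting over $n$ presents $f^{-1}(-\infty,r)$ and $f^{-1}(r,+\infty)$ as $(\mathcal C\setminus\mathcal C)_\sigma$ sets; the delicate point is extracting a \emph{countable} covering subfamily at each stage. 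For $(v)\Rightarrow(vi)$ I would use that the $\mathcal{DC}$-Lipschitz functions form a bounded lattice-algebra and that for closed convex $A$ the map $\max(0,1-\lambda\,d(\cdot,A))$ is $\mathcal{DC}$-Lipschitz and converges pointwise to an indicator; hence the indicator of any $A\setminus B$, and then of any $(\mathcal C\setminus\mathcal C)_\sigma$ set, is a pointwise limit of $\mathcal{DC}$-Lipschitz functions, and approximating $f$ by simple functions on its level sets and diagonalising gives $(vi)$.

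The three non-implications call for explicit examples, where the geometry of the ambient space enters. For $(iv)\not\Rightarrow(iii)$ I would build, on a norm-compact (hence separable, so automatically countably dentable) convex set, a uniformly continuous function assembled from infinitely many independent peeling steps so that $\Dz(f,\varepsilon)=\omega$ for some $\varepsilon$; being not finitely dentable, it is not a uniform limit of $\mathcal{DC}$-Lipschitz functions by the established $(ii)\Leftrightarrow(iii)$. For $(v)\not\Rightarrow(iv)$ I would pass to a non-separable $C$ with the RNP and construct a $(\mathcal C\setminus\mathcal C)_\sigma$-measurable function whose derivation indices reach $\omega_1$, so that it fails to be countably dentable although its level sets stay in the convex Borel class. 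For $(vi)\not\Rightarrow(v)$ I would take a pointwise limit of $\mathcal{DC}$-Lipschitz functions---a large, descriptively complex class---whose prescribed level set sits one descriptive level above the $(\mathcal C\setminus\mathcal C)_\sigma$ class. I expect these constructions, together with $(i)\Rightarrow(ii)$, to carry most of the technical weight of the theorem.
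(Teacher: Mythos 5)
Your positive half is partly sound: the trivial links, the transfer estimate $\Dz(f,\varepsilon)\leq\Dz(g,\varepsilon-2\eta)$ for $\norm{f-g}_\infty\leq\eta$, and the route $(ii)\Leftrightarrow(iii)$ through Proposition~\ref{prop:renormfindent} (peeling sublevel sets of each convex Lipschitz summand, combining the two norms by a sum of squares) all work and match the paper's mechanism for $(ii)\Rightarrow(iii)$, which uses the inf-convolution $f_n(x)=\inf\{f(y)+n(2\eqnorm{x}^2+2\eqnorm{y}^2-\eqnorm{x+y}^2):y\in C\}$. But the step you single out as the crux, $(i)\Rightarrow(ii)$, is left as two unexecuted strategies, and your factual claim motivating them --- that a difference of bounded convex functions ``need not be finitely dentable'' --- is wrong. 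The naive route you dismiss is precisely the paper's route: it quotes Propositions 3.1 and 5.1 of~\cite{R08}, which together say that every $\mathcal{DC}$-bounded function \emph{is} finitely dentable and that finite dentability passes to uniform limits. Your half-space obstruction only defeats the one-step peeling proof (a single separation cannot control both summands at once); it does not defeat the statement. The index is bounded by a counting argument in the spirit of the martingale construction in the paper's Section 4: in a dyadic-martingale tree witnessing $\Dz(f,\varepsilon)>n$, every child is $\varepsilon$-separated from its parent in $f$-value, hence moves one of the convex summands by $\varepsilon/2$, and since each summand is a bounded (sub)martingale along the tree, its quadratic variation is bounded by $\operatorname{osc}^2$, which caps the number of levels independently of any modulus of continuity. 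No uniform continuity of the approximants needs to be ``fed back in''.

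The negative half has a step that provably cannot work. For $(iv)\not\Rightarrow(iii)$ you propose a uniformly continuous function on a \emph{norm-compact} convex set with $\Dz(f,\varepsilon)=\omega$; but on a norm-compact convex set every continuous function is a uniform limit of $\mathcal{DC}$-bounded functions by Stone--Weierstrass/Kakutani--Krein (the paper records this at the start of Section 4), hence finitely dentable by the very equivalence $(i)\Leftrightarrow(ii)$ you are proving. Compactness kills the phenomenon; the paper instead takes $X=\ell_1$ (separable, RNP, non-superreflexive) and invokes Cepedello's theorem to get a Lipschitz, countably but not finitely dentable function. For $(v)\not\Rightarrow(iv)$ your plan (a nonseparable RNP set with derivation index reaching $\omega_1$) is unexecuted and needlessly hard --- outside the separable setting you would also have to prove measurability by hand, since open sets need not be $(\mathcal C\setminus\mathcal C)_\sigma$ there; you overlooked the cheap mechanism: on a separable space without the RNP, Proposition~\ref{prop:charRNPdent} gives a Lipschitz \emph{non-dentable} $f$, whose derivation never terminates (so it is not countably dentable), while measurability is free because every norm-open subset of a separable space is $(\mathcal C\setminus\mathcal C)_\sigma$ by~\cite{R03}. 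For $(vi)\not\Rightarrow(v)$ you give no example; the paper uses the characteristic function of the Cantor set from~\cite{R03}. Finally, two of your sketched positive implications have soft spots the paper avoids: in $(iv)\Rightarrow(v)$ the ``delicate point'' of extracting a countable subfamily dissolves, because an arbitrary union of open half-spaces is the complement of a closed convex set and the derived sets $[C]^\alpha_\varepsilon$ are automatically closed and convex, so each of the countably many stages contributes a \emph{single} $(\mathcal C\setminus\mathcal C)$-set and $f^{-1}(V)$ has a one-line representation; and in $(v)\Rightarrow(vi)$ your ``diagonalising'' of pointwise limits is not legitimate (iterated pointwise limits leave the first Baire class), which is why the paper quotes~\cite[Corollary 2.7]{R03} rather than composing limits.
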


\begin{proof} First we prove the positive statements. $(i) \Rightarrow (ii)$ follows from~\cite[Proposition 3.1]{R08} and~\cite[Proposition 5.1]{R08}. $(ii) \Rightarrow (iv)$ and $(iii)\Rightarrow (i)$ are obvious. Moreover, $(v) \Rightarrow (vi)$ is~\cite[Corollary 2.7]{R03}. In order to prove $(ii)\Rightarrow (iii)$, let $\eqnorm{\cdot}$ be the equivalent norm given by Proposition~\ref{prop:renormfindent}. Since $f$ is bounded, one can consider the sequence of functions given by
\[ f_n(x) = \inf\{ f(y)+n(2\eqnorm{x}^2+2\eqnorm{y}^2-\eqnorm{x+y}^2): y\in C\}. \]
Notice that
\[ f_n(x) = 2n\eqnorm{x}^2 - \sup\{n\eqnorm{x+y}^2-2n\eqnorm{y}^2-f(y): y\in C \} \]
and thus each $f_n$ is a $\mathcal{DC}$-Lipschitz function. Now the same arguments as in the proof of~\cite[Theorem 1.4]{R08} show that $(f_n)_n$ converges uniformly to $f$ on $C$\@.
Finally we will prove $(iv) \Rightarrow (v)$\@. If $V \subset {\mathbb R}$ is open then it is not difficult to check that
\[ f^{-1}(V) =\bigcup_{n=1}^{\infty} \bigcup_{\alpha<\Dz(f,n^{-1})}
\{x \in [C]^{\alpha}_{n^{-1}} : \exists H \in {\mathbb H} \textrm{ s.~t. } x \in H, f([C]_{n^{-1}}^{\alpha}\cap H) \subset V \}\,, \]
which is a representation of the set as countable union of $({\mathcal C} \setminus {\mathcal C})$-sets, as union of open slices of a closed convex set.

Now we turn to the negative statements. Remark 2.2 in~\cite{R03} shows that the characteristic function of the Cantor set is pointwise limit of $\mathcal{DC}$-Lipschitz functions but not $(\mathcal{C}\setminus\mathcal{C})_\sigma$-measurable and thus $(vi)\not\Rightarrow (v)$\@. Moreover, let $X$ be a separable Banach space without the RNP and let $f:B_X\to\mathbb R$ be a uniformly continuous not dentable function, which exists by Proposition~\ref{prop:charRNPdent}. As a consequence of~\cite[Theorem 1.2]{R03}, every norm open subset of $X$ is a $(\mathcal C\setminus \mathcal C)_\sigma$-set and thus $f$ is $(\mathcal C\setminus \mathcal C)_\sigma$-measurable, so $(v)\not \Rightarrow (iv)$\@. Finally, in order to show that $(iv)\not \Rightarrow (iii)$, let $X$ be a separable non-superreflexive Banach space with the RNP, e.g.~$X=\ell_1$\@. By Cepedello's theorem, there exists a Lipschitz function $f\colon B_X\to \mathbb R$ which is not finitely dentable. However, Proposition~\ref{prop:charRNPdent} and the separability of $X$ imply that $f$ is countably dentable. 
\end{proof}

The definition of $\mathcal{DC}$ function was extended to the vector-valued setting by Vesel\'y and Zaj\'i\v{c}ek in~\cite{VZ89} (see also~\cite{VZ16}). A continuous map $F\colon C\to Y$ defined on a convex subset $C\subset X$ is said to be a \emph{$\mathcal{DC}$ map} if there exists a continuous (necesarily convex) function $f$ on $C$ such that $f+y^*\circ F$ is a convex continuous function on $C$ for every $y^*\in S_{Y^*}$\@.  The function $f$ is called a \emph{control function} for $F$\@. Let us notice that in Proposition 1.13 in~\cite{VZ89} it is showed that $f$ is a control function for $F$ if, and only if,
\[ \norm{\sum_{i=1}^n \lambda_i F(x_i) - F(\sum_{i=1}^n \lambda_i x_i)}\leq \sum_{i=1}^n \lambda_i f(x_i) - f(\sum_{i=1}^n \lambda_i x_i)\]
whenever $x_1,\dotsc, x_n\in A$, $\lambda_1,\dotsc,\lambda_n\geq 0$ and $\sum_{i=1}^n \lambda_i = 1$\@.

We say that a $\mathcal{DC}$ map is \emph{$\mathcal{DC}$-bounded} if it is bounded and admits a bounded control function. The space of $\mathcal{DC}$-bounded maps  has been recently studied in~\cite{VZ16}. In the next result we characterise the dentability of such maps in terms of the dentability of their target space. We will denote by $\mathbb E(f|\mathcal F)$ the conditional expectation of a function $f$ given a $\sigma$-algebra $\mathcal F$\@.

\begin{proposition} Let $D\subset Y$ be a closed convex set. Then the following are equivalent:
\begin{enumerate}[(i)]
\item the set $D$ has the RNP;
\item for every Banach space $X$ and every convex subset $C\subset X$, every $\mathcal{DC}$-bounded map $F\colon C\to D$ is dentable.
\end{enumerate}
\end{proposition}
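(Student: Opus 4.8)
The plan is to prove the two implications separately. The direction $(i) \Rightarrow (ii)$ should be the routine one. If $D$ has the RNP and $F \colon C \to D$ is a $\mathcal{DC}$-bounded map with bounded control function $f$, then I would first observe that $f$ is a bounded convex function on $C$, hence $f \in \mathcal D_U(C, \mathbb R)$ restricted to any bounded set (a convex function bounded on a bounded convex set is Lipschitz, hence uniformly continuous, on slightly smaller sets—I would need to be a little careful here, but the control-function inequality is what does the real work). The key point is the inequality from Proposition 1.13 of~\cite{VZ89}: for a slice $C \cap H$ and points $x, y \in C \cap H$, the midpoint estimate $\norm{\tfrac12 F(x) + \tfrac12 F(y) - F(\tfrac{x+y}{2})} \leq \tfrac12 f(x) + \tfrac12 f(y) - f(\tfrac{x+y}{2})$ controls the oscillation of $F$ by that of $f$. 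Since $D$ has the RNP, the identity $\mathbb I \colon D \to D$ is dentable, and I would want to combine the dentability coming from the RNP of $D$ with the control function to produce slices of small $F$-oscillation; most naturally I would invoke Corollary~\ref{cor:composition} after checking that the relevant composite is uniformly continuous dentable, reducing everything to the scalar dentability already available for the convex control function via the remark after Theorem~\ref{th:structure}.

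The substantial direction is $(ii) \Rightarrow (i)$, which I would prove by contraposition: assuming $D$ fails the RNP, I must construct a Banach space $X$, a convex set $C \subset X$, and a $\mathcal{DC}$-bounded map $F \colon C \to D$ that is not dentable. The natural idea is to take $X = Y$ and $C = D$ itself and let $F$ be essentially the identity-type map, but the identity need not be $\mathcal{DC}$-bounded in a useful way, so the construction must be more clever. Since $D$ fails the RNP, by the Huff--Morris/Kuratowski characterisation used in Proposition~\ref{prop:charRNPdent} there is a bounded subset $A \subset D$ and $\varepsilon > 0$ such that every slice of $A$ has Kuratowski index exceeding $\varepsilon$; equivalently, there is a bounded non-dentable subset. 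I would then build a tree of points in $A$ exactly as in the proof of Proposition~\ref{prop:charRNPdent}, producing a martingale-like structure $(y_s)_s$ with the separation property~\eqref{eq:ineq}. The target map $F$ should send a suitable convex domain onto these tree points in a way that respects convex combinations (so that it becomes $\mathcal{DC}$ with a nice control function) while inheriting the non-dentability.

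The cleanest way to organise the construction is to let $C$ be a copy of a convex subset of $L_1$ or of the Bochner-type space built from the martingale, mapping into $D$ by the barycentre/limit of the martingale; the martingale property $y_s = \sum_m \mu_{s\frown m} y_{s\frown m}$ makes $F$ affine along the relevant convex combinations, which forces the control function to be taken as $f \equiv 0$ (or a constant), so that $F$ is automatically $\mathcal{DC}$ with a bounded control function. The non-dentability of $F$ then follows from~\eqref{eq:ineq}: every slice $C \cap H$ hits two consecutive tree levels whose images are at distance $\geq \varepsilon/2$ in $D$, so $\diam(F(C \cap H)) \geq \varepsilon/2$ for every half-space $H$ meeting $C$, exactly as in the final paragraph of the proof of Proposition~\ref{prop:charRNPdent}.

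The hard part will be engineering the domain $C$ and the map $F$ so that $F$ is genuinely $\mathcal{DC}$-bounded (bounded, with a \emph{bounded} control function) while remaining non-dentable; the affineness of $F$ along the martingale convex combinations is what makes the control function trivial, but I must ensure $F$ is \emph{globally} $\mathcal{DC}$ on all of $C$ and not merely affine on the combinations coming from the tree. I expect this to require embedding the tree into a Bochner $L_1(\mu, Y)$-type space and taking $F$ to be a vector-valued conditional-expectation (bush/martingale) operator, where the $\mathcal{DC}$ structure with vanishing control function is inherited from the linearity of conditional expectation, while the failure of dentability of $D$ obstructs any slice from having small image-diameter.
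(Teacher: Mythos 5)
You have the difficulty exactly inverted, and this produces two genuine problems. First, $(ii)\Rightarrow(i)$ is the \emph{trivial} direction, not the substantial one: any affine map is a $\mathcal{DC}$ map with the zero function as control function, so if $D$ fails the RNP you simply pick a bounded non-dentable subset $A\subset D$, set $C=\overline{\conv}(A)$, and observe that the identity $\mathbb I\colon C\to D$ is a bounded $\mathcal{DC}$ map with bounded (zero) control function which fails to be dentable because $A$ witnesses the failure. Your premise that ``the identity need not be $\mathcal{DC}$-bounded in a useful way, so the construction must be more clever'' is false, and the entire tree/Bochner-space/conditional-expectation construction you sketch for this direction is unnecessary; this is precisely the one-line argument the paper gives.

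Second, and more seriously, your plan for $(i)\Rightarrow(ii)$ --- which is the hard direction --- does not work. The control-function inequality of Proposition 1.13 of~\cite{VZ89} bounds only the \emph{convexity defect} $\norm{\sum_i\lambda_i F(x_i)-F(\sum_i\lambda_i x_i)}$ by the corresponding defect of $f$; it gives no control of the oscillation of $F$ on a slice by the oscillation of $f$ on that slice. The identity map above is the decisive obstruction to your mechanism: its control function is $f\equiv 0$, which has zero oscillation on every slice, yet the map is non-dentable whenever the set is. So no argument that derives slices of small $F$-oscillation from slices of small $f$-oscillation can succeed; the RNP of the \emph{target} $D$ must enter essentially, and it cannot enter through Corollary~\ref{cor:composition} either, since the only available composition is $\mathbb I_D\circ F=F$, and applying the corollary would require $F\in\mathcal D_U(C,D)$ in the first place --- circular. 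What is actually needed (and what the paper does) is a martingale argument: assuming $F$ is not dentable, Proposition~\ref{prop:dent} yields $\varepsilon>0$ and a bounded $A\subset C$ with $x\in\overline{\conv}\bigl(A\setminus F^{-1}(B_Y(F(x),\varepsilon))\bigr)$ for every $x\in A$; one then builds simple functions $g_n\colon[0,1]\to A$ forming an approximate martingale whose defects $\norm{F\circ g_n-\mathbb E(F\circ g_{n+1}|\mathcal F_n)}_1$ are telescoped by the bounded control function $f$ (starting from a point nearly maximising $f$ on $A$, so that the total defect is less than $\varepsilon/4$), and finally perturbs, via Lemma 5.10 of~\cite{BL00}, to a genuine martingale with values in $F(A)\subset D$ whose consecutive terms are $L_1$-separated by $\varepsilon/2$, contradicting the RNP of $D$ through martingale convergence. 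Interestingly, the quasi-martingale machinery you propose for the wrong direction is close in spirit to what the correct direction requires, but as organised in your proposal neither implication is proved.
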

\begin{proof}
First, assume that $D$ does not have the RNP. Let $A\subset D$ be a non-dentable bounded subset of $D$ and take $C=\overline{\conv}(D)$\@. Then the identity $\mathbb I\colon C\to D$ is a non-dentable bounded continuous $\mathcal {DC}$ map with the zero function as control function. Thus $(ii)$ implies $(i)$\@.

Now take a bounded continuous $\mathcal{DC}$ map $F\colon C\to D$ with a bounded control function $f$\@.  Given $\varepsilon>0$ and $x\in C$, we will denote
\[ \delta(x, \varepsilon) = \inf\{\norm{x-y}: y\in C, \max\{\norm{F(x)-F(y)},|f(x)-f(y)|\}>\varepsilon\}\,.\]
Assume that $F$ is not dentable. Then by Proposition~\ref{prop:dent} there exist $\varepsilon>0$ and $A\subset C$ bounded such that $x\in \overline{\conv}(A\setminus F^{-1}(B_Y(F(x),\varepsilon))$ for each $x\in A$\@.  We will construct a martingale $(h_n)_n$ with values in $F(A)\subset D$ and so that $\norm{h_n-h_{n+1}}\geq \varepsilon/2$\@.  In order to do that, we will define inductively an increasing sequence $(\mathcal F_n)_n$ of $\sigma$-algebras in the interval $[0,1]$ and a sequence $(g_n)_n$ of functions from $[0,1]$ to $A$ satisfying the following conditions for each $n\in \mathbb N$:
\begin{enumerate}
\item[1)] $\mathcal F_n$ is the $\sigma$-algebra generated by a finite partition $\pi_n$ of the unit interval into disjoint subintervals;
\item[2)] $g_n$ is $\mathcal F_n$-measurable;
\item[3)] $\norm{F (g_n(t))-F(g_{n+1}(t))}\geq \varepsilon$ for each $n\in\mathbb N$ and $t\in [0,1]$;
\item[4)] $\norm{F\circ g_n-\mathbb E(F\circ g_{n+1}|\mathcal F_n)}_1 \leq \int_0^1 (f\circ g_{n+1}-f\circ g_n)d\m + \frac{\varepsilon}{2^{n+3}}$, where $\m$ denotes the Lebesgue measure on $[0,1]$\@.
\end{enumerate}
Fix $x_0\in A$ so that $f(x_0)\geq \sup\{f,A\}-\frac{\varepsilon}{16}$, take $\pi_0=\{[0,1]\}$ and define $g_0(t)=x_0$ for each $t\in [0,1]$\@.  Assume we have defined $\pi_n = \{A_1,\dotsc, A_p\}$, $\mathcal F_n = \sigma(\pi_n)$ and a $\mathcal F_n$-measurable map $g_n\colon [0,1]\to B$\@.  Then there exist $x_1,\dotsc, x_p\in A$ such that $g_n = \sum_{i=1}^p x_i \chi_{A_i}$\@.  Now, the non-dentability of $F$ on $A$ implies that there are integers $k_i$, $\lambda_{ij} \geq 0$, and $x_{ij} \in A\setminus F^{-1}(B_Y(F(x_i),\varepsilon))$ for $j\in\{1,\dotsc, k_i\}$ satisfying that $\sum_{j=1}^{k_i} \lambda_{ij}=1$ and $\norm{\sum_{j=1}^{k_i} \lambda_{ij}x_{ij} - x_i}\leq \delta(x_i, 2^{-n-5}\varepsilon)$\@.  For each $i$, let $\{A_{ij} \}$ be a partition of $A_i$ into disjoint subintervals with $\m(A_{ij})=\lambda_{ij}\m(A_i)$\@.  Take $\pi_{n+1} = \{A_{ij}\}_{ij}$ and $\mathcal F_{n+1} = \sigma(\pi_{n+1})$\@.  Finally, define $g_{n+1} = \sum_{i=1}^p\sum_{k=1}^{k_i} x_{ij} \chi_{A_{ij}}$\@.  Clearly, $g_{n+1}$ is $\mathcal F_{n+1}$-measurable and takes values on $F(A)$\@.  Moreover, 
\[\norm{F(g_n(t))-F(g_{n+1}(t))}\geq \varepsilon\]
for each $t\in [0,1]$ since $x_{ij} \in B\setminus F^{-1}(B_Y(F(x_i),\varepsilon)$ for each $i,j$\@. By using the fact that $f$ is a control function for $F$ and the definition of $\delta$ we get that
\begin{align*}
\norm{\sum_{j=1}^{k_i}\lambda_{ij} F(x_{ij})-F(x_i)} & \leq \norm{\sum_{j=1}^{k_i}\lambda_{ij} F(x_{ij})-F(\sum_{j=1}^{k_i}\lambda_{ij} x_{ij})}+ \frac{\varepsilon}{2^{n+5}}\\
& \leq \left(\sum_{j=1}^{k_i}\lambda_{ij} f(x_{ij})-f(\sum_{j=1}^{k_i}\lambda_{ij}x_{ij})\right) + \frac{\varepsilon}{2^{n+5}} \\
& \leq \left(\sum_{j=1}^{k_i}\lambda_{ij} f(x_{ij})-f(x_i)\right) + \frac{\varepsilon}{2^{n+4}}\,.
\end{align*}
In addition, it is easy to show that $\mathbb E(F\circ g_{n+1}|\mathcal F_n) = \sum_{i=1}^n (\sum_{j=1}^{k_i} \lambda_{ij} F(x_{ij})) \chi_{A_{i}}$\@.  Thus, we can estimate $\norm{F\circ g_{n}-\mathbb E(F\circ g_{n+1}|\mathcal F_n)}_1$ as follows:
\allowdisplaybreaks
\begin{align*}
\norm{F\circ g_{n}-\mathbb E(F\circ g_{n+1}|\mathcal F_n)}_1 &\leq \sum_{i=1}^p \m(A_i)\norm{F(x_i)-\sum_{j=1}^{k_i}\lambda_{ij} F(x_{ij})}\\
&\leq \sum_{i=1}^p \m(A_i) \left(\sum_{j=1}^{k_i}\lambda_{ij} f(x_{ij})-f(x_i)\right) + \frac{\varepsilon}{2^{n+4}}\\
&= \sum_{i=1}^p\sum_{j=1}^{k_i} \m(A_{ij}) f(x_{ij}) - \sum_{i=1}^p \m(A_i) f(x_i) + \frac{\varepsilon}{2^{n+4}}\\
&= \int_0^1 f\circ g_{n+1}d\m -\int_0^1 f\circ g_n d\m + \frac{\varepsilon}{2^{n+4}}\,.
\end{align*}
This shows that conditions $1)$ to $4)$ above are satisfied. Finally, from condition $4)$ and the fact that $f(x_0)\geq \sup\{f,A\}-\frac{\varepsilon}{16}$ we get that
\[ \sum_{n=0}^N \norm{F\circ g_{n}-\mathbb E(F\circ g_{n+1}|\mathcal F_n)}_1 \leq \int_0^1 (f\circ g_{N+1} - f\circ g_0)d\m + \sum_{n=0}^N \frac{\varepsilon}{2^{n+4}} \leq \frac{\varepsilon}{16} + \frac{\varepsilon}{8}\,\]
for each $N\in\mathbb N$\@.  It follows that
\[ \sum_{n=0}^\infty \norm{F\circ g_{n}-\mathbb E(F\circ g_{n+1}|\mathcal F_n)}_1 < \frac{\varepsilon}{4}\,.\]
Thus, one can apply Lemma 5.10 in~\cite{BL00} to get a martingale $(h_n)_n$ with values in the bounded set $F(A)\subset D$ so that $\norm{h_n-F\circ g_n}_1\leq \varepsilon/4$\@.  Therefore
\[ \norm{h_n-h_{n+1}}_1\geq \norm{F\circ g_n- F\circ g_{n+1}}_1 - \norm{h_n- F\circ g_n}_1-\norm{h_{n+1}-F\circ g_{n+1}}_1\geq \frac{\varepsilon}{2}\,,\]
which contradicts the assumption that $D$ has the Radon-Nikod\'ym property. 
\end{proof}

\section{Dentable sets with respect to a metric}\label{sec:4}

Notice that, given a uniformly continuous injective map $f\colon C\to (M,d)$, we have that $f$ is dentable if and only if $C$ is dentable with respect to the metric $\rho:= d\circ f$. Moreover, $\rho$ is a uniformly continuous metric on $C$, that is, for every $\varepsilon>0$ there exists $\delta>0$ so that $d(x,y)<\varepsilon$ whenever $x,y\in C$, $\norm{x-y}<\delta$\@.

\begin{proposition}\label{prop:sepDent}
If $C \subset X$ is a bounded closed convex subset that admits a separating sequence, then it is dentable with respect to some norm-Lipschitz metric defined on it.
\end{proposition}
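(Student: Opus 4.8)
The plan is to realise the desired metric as the one induced by a suitable \emph{compact} linear operator, so that its dentability follows from the finite-dimensional case together with the stability of dentable maps under uniform limits.

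First I would normalise the separating sequence $(x_n^*)\subset X^*$ so that $\norm{x_n^*}\leq 1$, and define a bounded linear operator $T\colon X\to \ell_1$ by $Tx=(2^{-n}x_n^*(x))_n$. Since $\sum_n 2^{-n}|x_n^*(x)|\leq \norm{x}$, this is well defined with $\norm{T}\leq 1$. I would then set $\rho(x,y)=\norm{Tx-Ty}_1=\sum_n 2^{-n}|x_n^*(x)-x_n^*(y)|$. Because $(x_n^*)$ separates the points of $C$, the map $T|_C$ is injective and hence $\rho$ is a genuine metric on $C$; moreover $\rho(x,y)\leq \norm{x-y}$, so $\rho$ is norm-Lipschitz. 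By the remark opening this section (applied to the injective, Lipschitz, hence uniformly continuous map $T|_C$), it then remains only to prove that $T|_C\colon C\to\ell_1$ is a dentable map.

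The core of the argument is that $T$ is compact: writing $T_N x=\sum_{n=1}^N 2^{-n}x_n^*(x)e_n$ for its truncations, one has $\norm{T-T_N}\leq \sum_{n>N}2^{-n}=2^{-N}$, so $T$ is a norm limit of finite-rank operators. Each $T_N|_C$ is dentable: its range lies in the finite-dimensional space $\mathbb R^N$, which has the RNP, so any bounded $T_N(A)$ (for bounded $A\subset C$) admits slices of arbitrarily small diameter. Given such a slice of $T_N(A)$ determined by $\xi\in(\mathbb R^N)^*$, I would pull $\xi$ back to $x^*=\xi\circ T_N\in X^*$; using $\sup\{x^*,A\}=\sup\{\xi,T_N(A)\}$, the half-space $H=\{x:x^*(x)>\sup\{x^*,A\}-t\}$ meets $A$ and satisfies $T_N(A\cap H)\subset S(T_N(A),\xi,t)$, so $\diam(T_N(A\cap H))$ is small. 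Thus each $T_N|_C\in {\mathcal D_U}(C,\ell_1)$. Finally, since $C$ is bounded, $\sup_{x\in C}\norm{Tx-T_Nx}_1\leq 2^{-N}\sup_{x\in C}\norm{x}\to 0$, so $T|_C$ is a uniform limit of dentable maps and is therefore dentable by the closedness under uniform convergence established in the proof of Theorem~\ref{th:structure}. Equivalently, one may simply note that the compact operator $T$ is an RN-operator and invoke the corollary identifying RN-operators with dentable maps.

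The step I expect to demand the most care is this pull-back of slices: I must verify that a small-diameter slice of the finite-dimensional image $T_N(A)$ corresponds, through $x^*=\xi\circ T_N$, to a half-space of $X$ meeting $A$ with small $T_N$-oscillation, which rests on the identity $\sup\{x^*,A\}=\sup\{\xi,T_N(A)\}$ and on the dentability of bounded subsets of $\mathbb R^N$. Everything else—the well-definedness of $T$, the metric and Lipschitz properties of $\rho$, and the passage to the uniform limit—is routine.
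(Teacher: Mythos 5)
Your proposal is correct, and in fact the metric you build is exactly the one in the paper's proof, since $\rho(x,y)=\norm{Tx-Ty}_1=\sum_{n=1}^\infty 2^{-n}\left|x_n^*(x-y)\right|$; what genuinely differs is how dentability of $C$ with respect to this metric is established. The paper argues directly on the functionals: given $\varepsilon>0$ it picks $n$ with a small tail $\sum_{k>n}2^{-k}\left|x_k^*(x-y)\right|$ and invokes Corollary~\ref{cor:equidual} to find a single slice on which $x_1^*,\dotsc,x_n^*$ all have small oscillation; that corollary rests on the Baire-category Theorem~\ref{th:ssGdelta}, hence on the super-lemma and Bishop--Phelps machinery. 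You instead factor the metric through the compact operator $T\colon X\to\ell_1$, show each finite-rank truncation $T_N|_C$ is dentable by pulling back a small-diameter slice of the bounded set $T_N(A)\subset\mathbb{R}^N$ (which has the RNP) along $x^*=\xi\circ T_N$ --- a pull-back that is sound, since $\sup\{x^*,A\}=\sup\{\xi,T_N(A)\}$ gives $A\cap H\neq\emptyset$ and $T_N(A\cap H)\subset S(T_N(A),\xi,t)$ --- and then pass to the uniform limit using the closedness of $\mathcal{D}_U(C,\ell_1)$ from the proof of Theorem~\ref{th:structure}, which is an elementary $\varepsilon/3$ argument independent of Theorem~\ref{th:ssGdelta}. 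Your route is therefore more elementary and self-contained: a single slice of the image $T_N(A)$ controls all $N$ weighted coordinates at once, so in effect you give a direct finite-dimensional proof of the equi-dentability of finitely many functionals in the case at hand, whereas the paper's route via Proposition~\ref{prop:equident} buys generality (it applies to arbitrary uniformly continuous dentable functions, not just linear ones). Two trivial tidying points: in the pull-back you should dispose of the degenerate case $\xi\circ T_N=0$ (then $\diam(T_N(A))<\varepsilon$ already and any half-space meeting $A$ works), and your alternative one-liner ``compact operators are RN-operators, hence dentable by the corollary to Proposition~\ref{prop:dent}'' is also valid but imports an external classical fact, while the truncation argument stays entirely within the paper's toolkit.
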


\begin{proof} Fix $(x_n^*) \subset B_{X^*}$ a sequence which is separating on $C$ and define
\[ d(x,y)=\sum_{n=1}^{\infty} 2^{-n}|x_n^*(x-y)|.\]
Clearly, it is a metric on $C$ and it is norm Lipschitz. The dentability of $C$ with respect to $d$ follows from Corollary~\ref{cor:equidual}.
Indeed, fix $A \subset C$ nonempty and $\varepsilon>0$\@. Take $n \in {\mathbb N}$ such that $2^{-n} < \varepsilon$ and find a slice $S \subset C$ such that $|x^*_k(x-y)| < \varepsilon/2$ for all $k \leq n$ and $x,y \in S$\@. Then $\diam(S)<\varepsilon$ with respect to $d$\@.
\end{proof}

Let us recall that Stegall's variational principle (see, e.g.~\cite[Theorem 5.17]{BL00}) states that a lower semicontinuous bounded below function $f$ defined on a set with the RNP can be modified by adding an arbitrarily small linear perturbation $x^*$ in such a way that the resulting function $f + x^*$ admits a strong minimum. Geometrically that is the consequence of the existence of strongly exposed points of the epigraph of $f$ and of many strongly exposing functionals, as $x^*$\@. The pathological fact in our frame is that the slices associated to a strongly slicing functional could be convergent to a non strongly exposed point. Indeed, consider the following example. Let $C= B_{X}$, where $X=Z^*$ and $Z$ is a separable Banach space. By Proposition~\ref{prop:sepDent}, $C$ is dentable for a metric $d$ compatible with the weak$^*$ topology induced by the convergence on $Z$\@. Moreover, $d$ is a complete metric since $C$ is weak$^*$-compact. Thus, if $x^* \in \StS({\mathbb I}, C)$ then $S(A,x^*,t)$ converges to a point $x \in C$ when $t \rightarrow 0^+$\@. However, we can not ensure that $x \in \bigcap_{t>0} S(A,x^*,t)$ since in general $x^*$ is not continuous with respect to $d$\@. Adding such a hypothesis would send us back to the classical case, as Theorem~\ref{th:rnp} and Proposition~\ref{prop:dentRNP} show.

\begin{proof}[Proof of Theorem~\ref{th:rnp}.] Let $\mathbb I\colon C\rightarrow (C,d)$ denote the identity map, which by assumption is uniformly continuous on bounded sets. Given $A \subset C$ be nonempty and bounded, Theorem~\ref{th:ssGdelta} provides a $\mathbb I$-strongly slicing functional $x^*$\@. Moreover, since $d$ induces the norm topology, $\diam(\overline{S(A,x^*, t)})=\diam(S(A,x^*, t))$ for each $t>0$\@. Thus, the completeness of $d$ implies that $\bigcap_{t>0} \overline{S(A,x^*, t)}$ consists exactly on one point $y\in C$\@. Given $\varepsilon>0$, the coincidence of the norm topology and the one induced by $d$ provides $\delta>0$ such that $B_d(y,\delta)\subset B_{\norm{\cdot}}(y,\varepsilon)$\@. Now, if $t>0$ is small enough then $S(A,x^*, t)$ is contained in $B_d(y,\delta)$ and therefore the norm-diameter of $S(A,x^*, t)$ is less than $2\varepsilon$\@. Thus $A$ is norm-dentable and we are done.
\end{proof}

Let us remark that it is possible to relax the hypothesis of coincidence of topologies in the previous result. 

\begin{proposition}\label{prop:dentRNP} Assume that $C$ is dentable with respect to a metric $d$ which is uniformly continuous on bounded sets with respect to the norm. Consider $\mathbb I\colon C\to (C,d)$ the identity map. If for every $A\subset C$ nonempty closed convex bounded and every $x^*\in \StS(\mathbb{I},A)$ the set $\bigcap_{t>0} \overline{S(A,x^*,t)}^{\norm{\cdot}}$ is nonempty, then $C$ has the RNP.   
\end{proposition}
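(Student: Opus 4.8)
The plan is to show that every nonempty closed convex bounded subset $A\subseteq C$ is norm-dentable; by Rieffel's characterisation this is exactly the RNP for $C$ (and dentability of arbitrary bounded subsets reduces to the closed convex ones in the standard way). First I would record that $\mathbb I\colon C\to(C,d)$ is uniformly continuous on bounded sets and dentable, so $\mathbb I\in\mathcal D_U(C,(C,d))$ and Theorem~\ref{th:ssGdelta} applies: $\StS(\mathbb I,A)$ is a dense $\mathcal G_\delta$, so I may fix $x^*\in\StS(\mathbb I,A)$. Writing $\diam_d$ for the diameter in the metric $d$, uniform continuity of $d$ with respect to the norm gives $\diam_d(\overline{S(A,x^*,t)}^{\norm{\cdot}})=\diam_d(S(A,x^*,t))\to 0$. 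Hence the nested norm-closed family $\{\overline{S(A,x^*,t)}^{\norm{\cdot}}\}_{t>0}$ has $d$-diameter tending to $0$; since it is nonempty by hypothesis, its intersection is a single point $y\in A$. A short computation then shows that $x^*$ attains its maximum $M=\sup\{x^*,A\}$ over $A$ \emph{only} at $y$ and that the slices $d$-shrink to $y$.

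Next I would observe that to finish it is enough, for each $\delta>0$, to exhibit a norm-small convex piece of $A$ sticking out: if $y\notin\overline{\conv}(A\setminus B_{\norm{\cdot}}(y,\delta))$, then with $A_0=A\cap\overline{B}_{\norm{\cdot}}(y,\delta)$ and $B_0=\overline{\conv}(A\setminus B_{\norm{\cdot}}(y,\delta))$ one has $A_0\subseteq A\subseteq\overline{\conv}(A_0\cup B_0)$ and $y\in A_0\setminus B_0$, so applying Lemma~\ref{lemma:super} to the \emph{norm} identity $\mathbb I\colon A\to(A,\norm{\cdot})$ — which is legitimate because the superlemma only requires uniform continuity of the map — yields a half-space $H$ with $A\cap H\neq\emptyset$ and norm-$\diam(A\cap H)<3\delta$. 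Thus the entire difficulty is to prove that $y$ is a norm-denting point of $A$. Here the obstruction is genuine and is exactly the pathology described before the statement: the slices are $d$-small but need not be norm-small, so a priori there may exist $w_k\in A$ with $x^*(w_k)\to M$ and $d(w_k,y)\to0$ yet $\norm{w_k-y}\geq\delta$, i.e.\ $y\in\overline{\conv}(A\setminus B_{\norm{\cdot}}(y,\delta))$. This is the point at which the full strength of the hypothesis (valid for \emph{every} closed convex bounded subset and \emph{every} strongly slicing functional) must be used, the hypothesis playing the role of a completeness substitute that furnishes nonempty intersections for decreasing families of norm-closed slices.

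To rule out this pathology I would argue by contradiction. If $C$ fails the RNP there is a closed convex bounded $D\subseteq C$ and $\varepsilon>0$ with every slice of $D$ of norm-diameter $\geq\varepsilon$, equivalently $x\in\overline{\conv}(D\setminus B_{\norm{\cdot}}(x,\varepsilon/4))$ for every $x\in D$. Following the tree construction of Proposition~\ref{prop:charRNPdent} together with the martingale construction in the final proposition of Section~4, I would build a bounded $X$-valued martingale $(g_n)$ with values in $D$ whose consecutive differences satisfy $\norm{g_n-g_{n+1}}\geq\varepsilon/4$ pointwise; the new ingredient is to perform each branching \emph{inside a $d$-slice} supplied by a strongly slicing functional at the current atom, so that the $d$-defects $\int_0^1 d(g_n,g_{n+1})\,d\m$ are summable and $(g_n(\omega))_n$ is $d$-Cauchy for almost every $\omega$. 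Finally I would apply the intersection hypothesis to the closed convex hulls of the branches to pin down, along almost every path, a norm-limit of the $d$-Cauchy sequence $(g_n(\omega))$, forcing $(g_n)$ to converge in norm almost everywhere and contradicting $\norm{g_n-g_{n+1}}\geq\varepsilon/4$.

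The hard part is precisely this third step: one must branch with norm-jumps bounded below (coming from non-dentability) while keeping every branch inside a $d$-slice of controlled, summable size (coming from strong slicing), and then upgrade the resulting almost-everywhere $d$-convergence to almost-everywhere norm-convergence through the nonemptiness-of-intersection hypothesis. Reconciling the norm separation with the $d$-smallness at each node, and verifying that the hypothesis really does locate norm-limits along the branches, is the delicate point; by contrast the reductions of the first two paragraphs — setting up $y$ via Theorem~\ref{th:ssGdelta} and closing the argument through Lemma~\ref{lemma:super} once a denting point is in hand — are routine.
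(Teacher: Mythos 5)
There is a genuine gap, and you located it yourself: the ``hard part'' of your third step is never carried out, and it is precisely where the whole content of the proposition lives. Your first paragraph is sound and in fact reproduces the paper's opening move: for $x^*\in\StS(\mathbb I,A)$ and $x\in\bigcap_{t>0}\overline{S(A,x^*,t)}^{\norm{\cdot}}$ one gets $x^*(x)\geq\sup\{x^*,A\}-t$ for all $t>0$, so $x^*$ \emph{supports} $A$ at $x$. But at that point the paper stops: since by Theorem~\ref{th:ssGdelta} the set $\StS(\mathbb I,A)$ is a dense $\mathcal G_\delta$ in $X^*$, the support functionals of \emph{every} nonempty closed convex bounded $A\subset C$ contain a dense $\mathcal G_\delta$, and the Bourgain\textendash{}Stegall theorem (\cite[Theorem~3.5.5]{B83}: genericity of support functionals for all closed convex subsets implies the RNP) finishes the proof in one line. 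This citation is the missing idea in your write-up; what you attempt instead is essentially to re-prove Bourgain\textendash{}Stegall from scratch, which is a deep theorem, and your plan of showing that the particular point $y$ is a norm-denting point is even stronger than what is true: a support point produced by one fixed $x^*$ need not be denting, and proving it is (say, via generic strongly exposing functionals) presupposes the RNP you are trying to establish.

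Moreover, the two concrete mechanisms you propose for the martingale construction do not work as described. First, non-dentability gives $x_s\in\overline{\conv}\bigl(D\setminus B_{\norm{\cdot}}(x_s,\varepsilon)\bigr)$, but this certificate cannot be localised inside a prescribed small $d$-slice $H\ni x_s$: a convex combination forces \emph{some} far point to lie in $H$, yet it does not put $x_s$ in the closed convex hull of the far points \emph{of the slice}, so ``branching inside a $d$-slice with summable $d$-defects'' has no justification. Second, the nonemptiness hypothesis applies only to intersections $\bigcap_{t>0}\overline{S(A,x^*,t)}^{\norm{\cdot}}$ for strongly slicing $x^*$, not to closed convex hulls of branch tails; and even granting a point in such an intersection, a $d$-Cauchy sequence with a $d$-limit need not converge in norm---the paper's own example preceding the statement (the ball of a separable dual with a weak$^*$-compatible metric) exhibits exactly this failure: $d$-small slices of large norm diameter. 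So the upgrade from almost-everywhere $d$-convergence to almost-everywhere norm convergence, which your contradiction requires, cannot be extracted from the hypothesis in the way you indicate.
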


\begin{proof}
Let $A\subset C$ be nonempty closed convex bounded. Given $x^*\in \StS(\mathbb{I},A)$, take $x\in\bigcap_{t>0} \overline{S(A,x^*,t)}^{\norm{\cdot}}$\@. Then $x^*(x)\geq \sup\{x^*,A\}-t$ for each $t>0$ and thus $x^*$ supports $A$ at $x$\@. It follows that the set of support functionals of $A$ contains a dense ${\mathcal G}_\delta$ in $X^*$\@. The Bourgain\textendash{}Stegall theorem, see~\cite[Theorem 3.5.5]{B83}, implies that $C$ has the RNP.
\end{proof}

Finally, the following proposition provides an example of a bounded closed convex set which is not dentable with respect to any translation invariant metric. Let us recall that a theorem of Kakutani (see e.g.~\cite{R85}) asserts that a metric on a vector space for which addition and scalar multiplication are continuous is equivalent to a translation-invariant metric.

\begin{proposition} Let $C$ be the unit closed ball of $c_0(\Gamma)$, where $\Gamma$ is uncountable, and let $d$ be a translation-invariant metric on $C$\@. Then $C$ is not dentable with respect to $d$\@.
\end{proposition}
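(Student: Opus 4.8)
The plan is to prove non-dentability using the single witness $A=C$, by producing one $\varepsilon_0>0$ such that every nonempty slice of $C$ has $d$-diameter at least $\varepsilon_0$; this is exactly the negation of dentability with respect to $d$. Two structural facts drive the whole argument: first, $(c_0(\Gamma))^* = \ell_1(\Gamma)$, so every functional defining a half-space has \emph{countable} support; second, $\Gamma$ is uncountable, so each slice leaves uncountably many ``free'' coordinates untouched by its defining functional.

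First I would harvest a uniform constant from the uncountability. Since $d$ is a metric and $e_\gamma\neq 0$, we have $d(e_\gamma,0)>0$ for every $\gamma$, whence $\Gamma=\bigcup_{n\geq 1}\{\gamma: d(e_\gamma,0)\geq 1/n\}$; as $\Gamma$ is uncountable, some $\Gamma_0:=\{\gamma: d(e_\gamma,0)\geq \varepsilon_0\}$ is uncountable, where $\varepsilon_0=1/n_0$. I would also record that translation invariance yields $d(-e_\gamma,0)=d(e_\gamma,0)$ (translate the pair $(-e_\gamma,0)$ by $e_\gamma$, which keeps both points inside $C$).

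Next, fix an arbitrary nonempty slice $S=S(C,x^*,t)$ with $x^*=(a_\gamma)\in\ell_1(\Gamma)$ and choose $x_0\in S$. Because $\operatorname{supp}(x^*)$ is countable and $\Gamma_0$ is uncountable, I can select $\gamma_0\in\Gamma_0\setminus\operatorname{supp}(x^*)$. Perturbing $x_0$ only in the coordinate $\gamma_0$ — replacing $(x_0)_{\gamma_0}$ by $(x_0)_{\gamma_0}+1$ when it is $\leq 0$ and by $(x_0)_{\gamma_0}-1$ otherwise — produces a point $y_0$ that still lies in $C$ (the altered coordinate stays in $[-1,1]$, and only one coordinate changes so the $c_0$ condition is preserved), satisfies $x^*(y_0)=x^*(x_0)$ since $a_{\gamma_0}=0$, and hence belongs to $S$, with $y_0-x_0=\pm e_{\gamma_0}$. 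Translation invariance within $C$ then gives $d(x_0,y_0)=d(0,\pm e_{\gamma_0})=d(e_{\gamma_0},0)\geq\varepsilon_0$, so $\diam_d(S)\geq\varepsilon_0$. As the slice was arbitrary, $C$ is not dentable with respect to $d$.

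The heart of the argument — and the step I expect to be the main obstacle — is the realization that translation invariance collapses the $d$-diameter of an \emph{arbitrary} slice down to the one-parameter family of numbers $d(e_\gamma,0)$, after which the uncountable pigeonhole supplies a lower bound that is uniform over all slices. The only delicate bookkeeping is to carry out every translation and every coordinate modification so that all points involved remain inside $C$, ensuring that the (a priori only partially defined) metric $d$ and its invariance are legitimately applicable.
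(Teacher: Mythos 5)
Your proof is correct and takes essentially the same route as the paper's: both exploit that functionals in $\ell_1(\Gamma)$ have countable (essentially finite) support, so a slice is insensitive to a $\pm e_\gamma$ perturbation in an untouched coordinate, and translation invariance then bounds $d(e_{\gamma},0)$ (the paper uses $d(e_\eta,-e_\eta)$) on a slice of small diameter, which the uncountability of $\Gamma$ rules out. The differences are only organisational: you pigeonhole upfront to extract a uniform $\varepsilon_0$ and a single uncountable set $\Gamma_0$, arguing directly, where the paper argues by contradiction via a countable union of finite exceptional sets $A_{1/n}$; moreover your one-sided perturbation (adding or subtracting $1$ in the coordinate $\gamma_0$ according to the sign of $(x_0)_{\gamma_0}$) makes the verification that the perturbed point stays in $C$ airtight, a point the paper's two-sided $x\pm e_\eta$ passes over (it implicitly needs $x_\eta=0$, which is available since $x\in c_0(\Gamma)$ has countable support, but is not stated).
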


\begin{proof}
Denote by $\{e_\gamma\}_{\gamma\in \Gamma}$ the standard basis of $c_0(\Gamma)$\@. Fix $\varepsilon>0$ and take $x^*=(x_\gamma^*)_{\gamma\in \Gamma}\in S_{\ell_1(\Gamma)}$ providing a slice $S=\{x\in C: x^*(x)>t\}$ with $\diam(S)<\varepsilon$\@. Fix a point $x\in S$\@. Then there exists a finite subset $A_\varepsilon \subset \Gamma$ such that $\sum_{\gamma\notin A_\varepsilon} |x_\gamma^*| < x^*(x)-t$\@. Thus, given $\eta\in \Gamma\setminus A_\varepsilon$ we have
\[ x^*(x\pm e_\eta) \geq x^*(x) - |x_\eta^*| > t\]
and so $x\pm e_\eta\in S$\@. Therefore, $d(e_\eta, -e_\eta) = d(x+e_\eta, x-e_\eta) \leq \varepsilon$ for every $\eta\in \Gamma \setminus A_\varepsilon$\@.
Now, take $\eta \in \Gamma\setminus \cup_n A_{1/n}$, which exists since $\Gamma$ is uncountable. Then $d(e_\eta, -e_\eta)<1/n$ for every $n$, which is a contradiction.
\end{proof}

Note that such a phenomenon is impossible for a bounded closed convex subset $C$ of $\ell_\infty(\mathbb N)$ as a consequence of Proposition~\ref{prop:sepDent}.

\bibliographystyle{siam}
\bibliography{biblio}

\end{document}